\colorlet{Changes@Color}{red}
\newtheorem{thm}{Theorem}
\newtheorem{lem}[thm]{Lemma}
\newtheorem{prob}[thm]{Problem}
\newtheorem*{mainthm}{Main Theorem}
\theoremstyle{definition}
\newtheorem{rem}[thm]{Remark}
\newcommand{\C}{\mathbb{C}}                                                 
\newcommand{\e}{\varepsilon}
\newcommand{\free}[1]{\ensuremath{\mathcal{F}({#1})}}                                              
\newcommand{\FF}{\mathcal{F}}
\renewcommand{\ip}[2]{\ensuremath{\left\langle{#1},{#2}\right\rangle}}
\renewcommand{\geq}{\geqslant}
\renewcommand{\leq}{\leqslant}
\newcommand{\lint}[4]{\ensuremath{\int_{#1}^{#2}{#3}\:\mathrm{d}{#4}}}  
\newcommand{\Lipp}{\mathrm{Lip}}
\newcommand{\Lipz}[1]{\ensuremath{\Lip_0({#1})}}
\newcommand{\map}[3]{\ensuremath{{#1}\colon{#2}\longrightarrow{#3}}}             
\newcommand{\N}{\mathbb{N}}
\newcommand{\n}[1]{\norm{#1}}
\newcommand{\ndot}{\n{\,\cdot\,}}                                       
\newcommand{\pn}[2]{\n{#1}_{#2}}                          
\newcommand{\R}{\mathbb{R}}                                                 
\newcommand{\res}[1]{\ensuremath{\!\!\upharpoonright_{#1}}}                 
\newcommand{\set}[2]{\ensuremath{\left\{{#1}\colon {#2}\right\}}}         
\newcommand{\tri}{{\displaystyle |\kern-.9pt|\kern-.9pt|}}
\newcommand{\tribig}{{\displaystyle \Bigg|\kern-.9pt\Bigg|\kern-.9pt\Bigg|}}
\newcommand{\tn}[1]{\ensuremath{\tri{#1}\tri}}    
\newcommand{\tnbig}[1]{\ensuremath{\tribig{#1}\tribig}}                                                   
\newcommand{\tndot}{\ensuremath{\tri\cdot\tri}}                                                 
\newcommand{\Lipf}{\ensuremath{\Lip_{0,F}\left(rB_{X}\right)}}
\DeclareMathOperator{\aspan}{span}                                        
\DeclareMathOperator{\Lip}{Lip}                                                                                     
\DeclareMathOperator{\conv}{co}                                          
\DeclareMathOperator{\dist}{dist}
\begin{document}
\title[Lipschitz-free spaces over compact subsets]{Lipschitz-free spaces over compact subsets\\ of superreflexive spaces are weakly\\ sequentially complete}

\author[T. Kochanek]{Tomasz Kochanek}
\address{Institute of Mathematics, Polish Academy of Sciences, \'Sniadeckich 8, 00-656 Warsaw, Poland\, {\rm and}\, Institute of Mathematics, University of Warsaw, Banacha~2, 02-097 Warsaw, Poland}
\email{tkoch@impan.pl}

\author[E. Perneck\'a]{Eva Perneck\'a}
\address{Institute of Mathematics, Polish Academy of Sciences, \'Sniadeckich 8, 00-656 Warsaw, Poland\, {\rm and}\, Faculty of Information Technology, Czech Technical University in Prague, Th\'akurova 9, 160 00, Prague 6, Czech Republic}
\email{e.pernecka@impan.pl}

\subjclass[2010]{Primary 46B03, 46B20, 54E35}
\keywords{Lipschitz-free space, weakly sequentially complete}

\begin{abstract}
Let $M$ be a~compact subset of a~superreflexive Banach space. We prove that the~Lipschitz-free space $\mathcal{F}(M)$, the predual of the~Banach space of Lipschitz functions on $M$, has the Pe\l czy\'nski's property ($V^\ast$). As a~consequence, the~Lipschitz-free space $\mathcal{F}(M)$ is weakly sequentially complete.

\end{abstract}
\maketitle	

\section{Introduction}

\noindent
By the~Aharoni's result \cite{aharoni}, if a~metric space $M$ contains a~bilipschitz copy of $c_0$, then the~Lipschitz-free space $\free{M}$ contains an~isomorphic copy of every separable Banach space. In \cite{DF}, Dutrieux and Ferenczi asked about the~converse in the~case of Banach spaces, that is, whether for a~Banach space $X$ whose Lipschitz-free space $\free{X}$ is a~universal separable Banach space, $X$ contains a~bilipschitz copy of $c_0$. C\'uth, Doucha and Wojtaszczyk addressed this question in \cite{CDW} and provided partial progress, which we cite in Theorem \ref{CDW_theorem} below.

A sequence $(x_n)_{n=1}^{\infty}$ in a~Banach space $X$ is \emph{weakly Cauchy} if the~sequence $(\ip{x_n}{x^\ast})_{n=1}^\infty$ is convergent for every $x^\ast\in X^\ast$. A~Banach space $X$ is called \emph{weakly sequentially complete} if every weakly Cauchy sequence in $X$ is weakly convergent. Since $c_0$ is not weakly sequentially complete, it does not linearly embed into a~weakly sequentially complete space.

\begin{thm}[{\cite[Thm.~1.3]{CDW}}]\label{CDW_theorem}
For arbitrary $n\in\N$ and $M\subset\R^n$ the~Lipschitz-free space $\free{M}$ is weakly sequentially complete.
\end{thm}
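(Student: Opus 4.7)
My plan is to reduce the problem to the ambient space $\R^n$ and then to embed $\free{\R^n}$ into an $L_1$-space.

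\smallskip

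\noindent\textbf{Step 1 (Reduction to $\free{\R^n}$).} By McShane's extension theorem, every $f\in\Lipz{M}$ extends to some $\tilde f\in\Lipz{\R^n}$ with $\Lip(\tilde f)=\Lip(f)$; dualizing, the inclusion $M\hookrightarrow\R^n$ induces an isometric linear embedding $\free{M}\hookrightarrow\free{\R^n}$. Since weak sequential completeness passes to closed subspaces, it suffices to show that $\free{\R^n}$ itself is weakly sequentially complete.

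\smallskip

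\noindent\textbf{Step 2 (Cauchy--Crofton embedding into $L_1$).} Equipping $\R^n$ with the Euclidean norm (all norms being equivalent, so this is harmless up to isomorphism), I combine the Cauchy--Crofton identity
\[
\n{x-y}_2 = c_n\int_{S^{n-1}}|\ip{x-y}{\theta}|\,\mathrm{d}\sigma(\theta)
\]
with the layer-cake formula $|a-b|=\int_{\R}|\ind{\{a\leq t\}}-\ind{\{b\leq t\}}|\,\mathrm{d}t$ to obtain an isometric embedding $\Phi\colon\R^n\to L_1(S^{n-1}\times\R)$ via $\Phi(x)=\ind{A_x}-\ind{A_0}$, where $A_x=\{(\theta,t)\colon\ip{x}{\theta}\leq t\}$. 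By the universal property of Lipschitz-free spaces, $\Phi$ extends to a bounded linear operator $\bar\Phi\colon\free{\R^n}\to L_1$, and a Fubini computation yields
\[
\n{\bar\Phi(\mu)}_{L_1} = c_n\int_{S^{n-1}}\n{(\pi_\theta)_\ast\mu}_{\free{\R}}\,\mathrm{d}\sigma(\theta),\qquad \pi_\theta(x)=\ip{x}{\theta}.
\]

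\smallskip

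\noindent\textbf{Step 3 and the main obstacle.} The heart of the proof is the reverse estimate $\n{\mu}_{\free{\R^n}}\leq C_n\n{\bar\Phi(\mu)}_{L_1}$, which would make $\bar\Phi$ an isomorphic embedding of $\free{\R^n}$ into $L_1$; weak sequential completeness of $\free{\R^n}$ (and hence of $\free{M}$) would then follow immediately from that of $L_1$. This is a ``sliced-to-full'' $W_1$ inequality, bounding the Kantorovich--Rubinstein norm of $\mu$ on $\R^n$ by the average of the Kantorovich--Rubinstein norms of its one-dimensional projections $(\pi_\theta)_\ast\mu$. On subsets of bounded diameter such an inequality is expected to hold with constants depending only on $n$ and the diameter, but the bound degrades at large scales, so a direct global Cauchy--Crofton embedding is insufficient. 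I would therefore supplement it with the $1$-Lipschitz radial retractions $R_k\colon\R^n\to\cl{B}(0,k)$, which induce compatible, uniformly bounded projections $\free{R_k}\colon\free{\R^n}\to\free{\cl{B}(0,k)}$ converging to $\id$ strongly: each bounded piece $\free{\cl{B}(0,k)}$ embeds isomorphically into $L_1$ and is thus weakly sequentially complete, and it remains to transfer this to the closed union $\free{\R^n}$. The latter transfer is the genuine obstacle, as direct limits of weakly sequentially complete spaces need not be so (cf.\ $c_0=\overline{\bigcup\R^k}$); it should be effected by showing in addition that $\free{\R^n}$ contains no isomorphic copy of $c_0$, via the Haj\l ek--Perneck\'a Schauder decomposition of $\free{\R^n}$ into finite-dimensional blocks and a Bessaga--Pe\l czy\'nski argument exploiting the absence of bilipschitz copies of $c_0$ in $\R^n$.
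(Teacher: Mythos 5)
Your Steps 1 and 2 are fine, but the proof collapses at Step 3, and the gap is not repairable along these lines. The reverse estimate $\n{\mu}_{\free{\R^n}}\leq C_n\n{\bar\Phi(\mu)}_{L_1}$ is false for every $n\geq 2$, and, contrary to what you assert, restricting to sets of bounded diameter does not help: by the theorem of Naor and Schechtman (\emph{Planar earthmover is not in $L_1$}, SIAM J. Comput.~37 (2007)), the transportation cost metric on probability measures supported on the grid $\{0,1,\ldots,k\}^2$ requires distortion of order $\sqrt{\log k}$ in any embedding into $L_1$, and (after rescaling) this metric space sits isometrically inside $\free{[0,1]^2}$. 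Consequently $\free{[0,1]^n}$, and a fortiori $\free{\overline{B}(0,k)}$ and $\free{\R^n}$, admits no isomorphic embedding into any $L_1$-space once $n\geq 2$. This kills both your main route and your fallback via radial retractions, since the bounded pieces $\free{\overline{B}(0,k)}$ already fail to embed; the known ``sliced-to-full'' $W_1$ inequalities on bounded domains necessarily lose a power of the sliced distance precisely because a linear bound would contradict Naor--Schechtman. Independently of this, your final transfer step is also insufficient: even if each $\free{\overline{B}(0,k)}$ were weakly sequentially complete, excluding $c_0$ from $\free{\R^n}$ would not yield weak sequential completeness of the closure of the union --- James' quasi-reflexive space contains neither $c_0$ nor $\ell_1$ and is not weakly sequentially complete, so a Bessaga--Pe\l czy\'nski $c_0$-argument cannot close the argument by itself.

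The statement is quoted from C\'uth--Doucha--Wojtaszczyk, and their proof (as recalled in the introduction of the present paper) replaces the $L_1$-target by a functional-analytic substitute that does work: after your Step 1 reduction to $\free{[0,1]^n}$, one shows that $\free{[0,1]^n}$ embeds isomorphically into $(C^1([0,1]^n))^\ast$ --- essentially because $C^1$-smooth Lipschitz functions obtained by convolution are pointwise dense in the unit ball of $\Lipz{[0,1]^n}$, so testing against them recovers the free-space norm up to a constant --- and then invokes Bourgain's theorem that $(C^1([0,1]^n))^\ast$ is weakly sequentially complete (indeed that $C^1([0,1]^n)$ has Pe\l czy\'nski's property ($V$), via the gradient map realizing it inside $C([0,1]^n,\R^n)$). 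That is also the strategy which the present paper adapts, in place of any $L_1$-embedding, to the superreflexive setting.
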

\noindent
Note that in view of \cite[Cor.~3.3]{kaufmann}, this is equivalent to $\free{[0,1]^n}$ being weakly sequentially complete, where the~cube $[0,1]^n$ can be equipped with the~metric given by an~arbitrary norm on $\R^n$.

The~authors in \cite{CDW} next pose a~question whose negative answer could bring us closer to a~solution to the~original problem from \cite{DF}. Namely, they ask whether $c_0$ linearly embeds into $\free{\ell_2}$ (\cite[Question 3]{CDW}). We extend the~result from Theorem \ref{CDW_theorem} in the~spirit of the~proposed question. The~main result of the~present paper reads as follows.

\begin{mainthm}
If $M$ is a~compact subset of a~superreflexive Banach space, then the Lipschitz-free space $\free{M}$ is weakly sequentially complete.

\end{mainthm}

The~method of proving Theorem~\ref{CDW_theorem} in \cite{CDW} was based on a~direct application of Bourgain's result about the~weak sequential completeness of $(C^1([0,1]^n))^\ast$, the~dual of the~space of $C^1$-smooth functions on $[0,1]^n$, whereas our approach is based on adapting Bourgain's strategy and combining it with combinatorial properties of superreflexive spaces as well as certain approximation techniques for Lipschitz maps.

Note that Bourgain (\cite{bourgain1}, \cite{bourgain2}) actually proved something stronger than the~weak sequential completeness. Namely, he showed that a~certain class of subspaces of $C(S,E)$, where $S$ is a~compact Hausdorff space and $E$ is a~finite-dimensional Banach space, has the~so-called Pe\l czy\'nski's property ($V$)---a~condition introduced in \cite{pelczynski} which ultimately leads to the~weak sequential completeness of the~dual space (for details, see \cite[\S III.D]{W_book}). Recall that a~series $\sum_{n=1}^\infty x_n$ in a~Banach space $X$ is called {\it weakly unconditionally Cauchy} (WUC for short) if $\sum_{n=1}^\infty\abs{\ip{x_n}{x^\ast}}<\infty$ for every $x^\ast\in X^\ast$. A~Banach space $X$ is said to have {\it property} ($V$) provided that for every $K\subset X^\ast$, the~relative weak compactness of $K$ is equivalent to the~condition:
\begin{equation*}
\lim_{n\to\infty}\sup\bigl\{\abs{\ip{x_n}{x^\ast}}\colon x^\ast\in K\bigr\}=0\,\,\mbox{ for every WUC series }\sum_{n=1}^\infty x_n\mbox{ in }X,
\end{equation*}
and $X$ is said to have {\it property} ($V^\ast$) if for every $K\subset X$, the~relative weak compactness of $K$ is equivalent to the~condition:
\begin{equation*}
\lim_{n\to\infty}\sup\bigl\{\abs{\ip{x}{x_n^\ast}}\colon x\in K\bigr\}=0\,\,\mbox{ for every WUC series }\sum_{n=1}^\infty x_n^\ast\mbox{ in }X^\ast.
\end{equation*}
Among the results concerning these properties established in \cite{pelczynski}, we would like to point out  two relations crucial in our context. The first one says that if a Banach space $X$ has property ($V$), then $X^\ast$ has property ($V^\ast$), and by the other one, a Banach space $X$ with property ($V^\ast$) is weakly sequentially complete. Hence, the~key part of our argument is to show that for a compact subset $M$ of a superreflexive Banach space, the corresponding Lipschitz-free space $\free{M}$ admits property ($V^\ast$) (see Theorem~\ref{thm: technical theorem} below).

As defined by Godefroy and Talagrand in \cite{GT}, a Banach space $X$ has {\it property} ($X$) if for every $x^{\ast\ast}\in X^{\ast\ast}$, the condition
\begin{equation*}
\sum_{n=1}^\infty\ip{x_n^\ast}{x^{\ast\ast}}=\Big\langle w^\ast\mbox{-}\sum_{n=1}^\infty x_n^\ast,x^{\ast\ast}\Big\rangle\,\,\mbox{ for every WUC series }\sum_{n=1}^\infty x_n^\ast\mbox{ in }X^\ast
\end{equation*}
implies that $x^{\ast\ast}\in X$. Property ($X$) is strictly stronger than property ($V^\ast$), it is isomorphic invariant, and a Banach space with property ($X$) is strongly unique isometric predual of its dual (see \cite{GT}, \cite{HWW} and references therein). Weaver \cite{weaver_paper} recently showed that the Lipschitz-free space over a metric space with finite diameter or over a Banach space is the strongly unique predual of its dual. In \cite{GL}, Godefroy and Lerner formulate the following problem:
\begin{prob}[\cite{GL}]
\label{problem_X}
Let $n>1$. Does $\free{\R^n}$ have property ($X$)?
\end{prob}
\noindent
Theorem \ref{thm: technical theorem} provides thus also partial information in this line of investigation.

Let us mention that other nontrivial examples of metric spaces whose Lipschitz-free spaces are weakly sequentially complete, or even admit the~Schur property, include uniformly discrete metric spaces, snowflaking of any metric space (both to be found in \cite{kalton}), metric spaces that isometrically embed into an~$\R$-tree \cite{godard}, separable ultrametric spaces \cite{CD}, countable proper metric spaces (\cite{HLP}, \cite{petitjean} and \cite{dalet}), or metric spaces originating from $p$-Banach spaces with a~monotone FDD \cite{petitjean}.



\section{Preparations}
\noindent
For a~Banach space $X$ we denote by $B_X$ and $S_X$ the~unit ball and the~unit sphere of $X$, respectively.

Let $(M,d)$ be a~pointed metric space, that is, a~metric space with a~distinguished point $0\in M$. Then the~space $\Lipz{M}$ of all real-valued Lipschitz functions on $M$ which vanish at $0$, equipped with the~norm given by the~Lipschitz constant of a~function 
$$
\pn{f}{\Lipp}=\sup\set{\frac{\abs{f(p)-f(q)}}{d(p,q)}}{p,q\in M,\,p\neq q}\quad(f\in\Lipz{M}),
$$
is a~Banach space. The~metric space $M$ isometrically embeds into $\Lipz{M}^\ast$ via the~Dirac map $\map{\delta}{M}{\Lipz{M}^\ast}$ defined by
$\ip{f}{\delta(p)}=f(p)$ for $f\in\Lipz{M}$ and $p\in M$. The~\emph{Lipschitz-free space over M}, denoted $\free{M}$, is the~norm-closed linear span of\linebreak $\set{\delta(p)}{p\in M}$ in $\Lipz{M}^\ast$ with the~norm $\n{\,\cdot\,}_\FF$ induced by that of $\Lipz{M}^\ast$. Its dual space $\free{M}^\ast$ is linearly isometric to $\Lipz{M}$ and on the~unit ball of $\Lipz{M}$ the~weak$^\ast$ topology induced by $\free{M}$ coincides with the~topology of pointwise convergence.

Lipschitz-free spaces are characterized by their universality property which is illustrated by this diagram:
$$
\xymatrix{
M\ar[r]^L\ar[d]_{\delta} & X\\
\free{M}\ar[ur]_{\bar{L}} & 
}
$$
and reads as follows. If $M$ is a~pointed metric spaces, $X$ is a~Banach space and \mbox{$\map{L}{M}{X}$} is any Lipschitz map such that $L(0)=0$, then there exists a~unique linear map\linebreak \mbox{$\map{\bar L}{\free{M}}{X}$} such that $\bar L\circ\delta=L$ and $\|\bar L\|=\pn{L}{\Lipp}$ ({\it cf.} \cite[Lemma 3.2]{kalton}).  

For the~introduction to Lipschitz-free spaces (also known as Arens--Eells spaces) we refer the~reader to the~book \cite{weaver} by Weaver, fundamental papers \cite{GK} and \cite{kalton} by Godefroy--Kalton and Kalton, respectively, or the~latest survey \cite{G} by Godefroy.

In this section, we only recall facts that will later be used in our work. Let us begin by a~well-known observation, essential in the~theory of Lipschitz-free spaces, that any real-valued $L$-Lipschitz function $f$ on a~nonempty subset $N$ of a~metric space $(M,d)$ can be extended to an~$L$-Lipschitz function $\bar f$ on $M$. Indeed, apply for instance the~McShane's \cite{McShane} inf-convolution formula
$$
\bar f(p)=\inf\set{f(q)+Ld(p,q)}{q\in N}\quad (p\in M).
$$

One of the~key properties enjoyed by Lipschitz-free spaces, which has assured them an~important role in nonlinear functional analysis, is that they provide a~linearization of Lipschitz maps in the~following way. If we embed, through the~Dirac map $\delta$, pointed metric spaces $M$ and $N$ into the~corresponding Lipschitz-free Banach spaces $\free{M}$ and $\free{N}$, respectively, then any Lipschitz map $\map{L}{M}{N}$ such that $L(0)=0$ extends to a~bounded linear operator $\map{\hat L}{\free{M}}{\free{N}}$ with $\|\hat L\|=\pn{L}{\Lipp}$. That is, the~diagram below commutes:
$$
\xymatrix{
M\ar[r]^L\ar[d]_{\delta_M} & N\ar[d]^{\delta_N}\\
\free{M}\ar[r]^{\hat{L}} & \free{N}
}
$$
This follows easily from the~universality property when $\hat L=\overline{\delta_N\circ L}$. In fact, $\hat{L}$ is the~predual operator to $\map{L^{\#}}{\Lipz{N}}{\Lipz{M}}$ defined by $L^{\#}(F)=F\circ L$ (see \cite[Lemma~3.1]{kalton}). Consequently, if $M$ and $N$ are bilipschitz homeomorphic, then $\free{M}$ and $\free{N}$ are isomorphic; in particular, passing to a~strongly equivalent metric on a~metric space does not change the~isomorphism class of the~resulting Lipschitz-free space. Similarly, if $M$ is a~subspace of $N$, then $\free{M}$ is linearly isometric to a~subspace of $\free{N}$. 

The~approach in \cite{weaver} provides a~formula for the~norm on Lipschitz-free spaces which relies only on the~metric of the~underlying metric space and does not involve Lipschitz functions---a phenomenon referred to as Kantorovich--Rubinstein duality. To wit, we have
$$
\pn{\mu}{\FF}=\inf\set{\sum_{i=1}^k\abs{a_i}d(p_i,q_i)}{\mu=\sum_{i=1}^k a_i (\delta(p_i)-\delta(q_i))}
$$
for $\mu\in\aspan\set{\delta(p)}{p\in M}$ (where we adopt the~convention $\delta(0)=0$). A~detailed argument can be found, {\it e.g.}, in the~introduction to \cite{CDW}. The~above formula for $\n{\,\cdot\,}_\FF$ along with \cite[Lemma~3.100]{FHHMZ} yields that every $\mu\in\free{M}$ has a~representation
$$
\mu=\sum_{i=1}^{\infty}a_i\frac{\delta(p_i)-\delta(q_i)}{d(p_i,q_i)}
$$
with some $(a_i)_{i=1}^{\infty}\subset\ell_1$ and $(p_i,q_i)_{i=1}^{\infty}\subset\widetilde{M}:=\{(p,q)\in M^2\colon p\neq q\}$. Moreover, $\n{\mu}_\FF$ is the~infimum of the~$\ell_1$-norm of $(a_i)_{i=1}^\infty$ over all such representations.

Recall that a~Banach space $X$ is called {\it superreflexive} provided that every Banach space that is finitely representable in $X$ is reflexive; equivalently---every ultrapower of $X$ is reflexive. It is a~famous theorem by Enflo \cite{enflo} saying that $X$ is superreflexive if and only if it admits an~equivalent uniformly convex norm, that is, a~norm such that $\delta_X(\e)>0$ for each $\e\in (0,2]$, where $\delta_X(\e)$ is the~modulus of convexity of $X$. This happens to be also equivalent to admitting an~equivalent uniformly smooth norm, that is, a~norm for which the~modulus of smoothness $\varrho_X(\tau)=o(\tau)$ as $\tau\to 0$.

Pisier \cite{pisier}, using a~martingale-type approach, established a~precise quantitative version of Enflo's theorem. Namely, every superreflexive space can be renormed so that its modulus of convexity satisfies $\delta_X(\e)\geq c\e^q$ for each $\e\in (0,2]$ and some constants $c>0$ and $q\geq 2$; a~space admitting such a~renorming is called $q$-{\it convex}. Every superreflexive space can also be renormed so that the~modulus of smoothness satisfies $\varrho_X(\tau)\leq C\tau^p$ for each $\tau\in (0,\infty)$ and some constants $C>0$ and $1<p\leq 2$; a~space admitting such a~renorming is called $p$-{\it smooth}.

Among many permanence properties of superreflexive spaces we shall need the~following two: Firstly, $X$ is superreflexive if and only if $X^\ast$ is superreflexive---this follows immediately from the~well-known duality formula  
$$
\varrho_X(\tau)=\sup\Bigl\{\frac{\tau\e}{2}-\delta_{X^\ast}(\e)\colon 0<\e\leq 2\Bigr\}\qquad (\tau>0)
$$
(see \cite[Lemma~9.8]{FHHMZ}) which shows that $X^\ast$ is $q$-convex if and only if $X$ is $p$-smooth, where $1<p\leq 2\leq q<\infty$ satisfy $p^{-1}+q^{-1}=1$. Secondly, if $X$ is superreflexive, then the~Lebesgue--Bochner space $L_2(X)$ of all $X$-valued Bochner square integrable functions on $[0,1]$ is superreflexive too. Moreover, there is a~precise quantitative statement of this fact due to Figiel \cite{figiel} and Figiel and Pisier \cite{figiel_pisier}.
\begin{thm}[{\cite[Thm.~1.e.9]{LT}}]\label{thm: Figiel-Pisier}
For every Banach space $X$ there exist constants $a,b>0$ such that
$$
\delta_X(\e)\geq\delta_{L_2(X)}(\e)\geq a\delta_X(b\e)\quad\mbox{for every }\e\in (0,2].
$$
\end{thm}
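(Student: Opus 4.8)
The plan is to treat the two inequalities separately. The left-hand one, $\delta_X(\e)\geq\delta_{L_2(X)}(\e)$, is immediate: the constant functions form a subspace of $L_2(X)$ isometric to $X$, and the modulus of convexity of a subspace always dominates that of the ambient space (its defining infimum ranges over fewer pairs of points). All the work is in the reverse inequality $\delta_{L_2(X)}(\e)\geq a\,\delta_X(b\e)$.

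The core of the argument should be a single pointwise estimate, a ``uniform convexity of the squared norm'': there is a universal $c_0>0$ with
$$\left\|\tfrac{x+y}{2}\right\|^2+c_0\,\delta_X\!\left(\tfrac{\|x-y\|}{\|x\|+\|y\|}\right)\left(\tfrac{\|x\|+\|y\|}{2}\right)^2\leq\tfrac{\|x\|^2+\|y\|^2}{2}$$
for all $x,y\in X$ not both zero. I would prove it by homogeneity, normalizing $\|x\|+\|y\|=2$ and writing $\|x\|=1+r$, $\|y\|=1-r$ with $r\in[0,1]$, so the right side is $1+r^2$ and $\e:=\|x-y\|/2\in[r,1]$ is the argument of $\delta_X$, and then splitting into cases: if $c_0\delta_X(\e)\leq r^2$ the trivial bound $\|\tfrac{x+y}{2}\|\leq\tfrac{\|x\|+\|y\|}{2}=1$ suffices; otherwise the classical Nordlander inequality $\delta_X(\e)\leq\delta_{\ell_2}(\e)\leq\e^2/4$ forces $r<\e/2$, hence $y\neq0$, and writing $\tfrac{x+y}{2}=(1-\lambda)\tfrac{x}{\|x\|}+\lambda\tfrac{y}{\|y\|}$ with $\lambda=\tfrac{1-r}{2}\leq\tfrac12$, combining the convex-combination form of uniform convexity $\|(1-\lambda)a+\lambda b\|\leq 1-2\lambda\,\delta_X(\|a-b\|)$ for $a,b\in S_X$ with the estimate $\|\tfrac{x}{\|x\|}-\tfrac{y}{\|y\|}\|\geq 2(\e-r)>\e$ gives $\|\tfrac{x+y}{2}\|\leq 1-\tfrac12\delta_X(\e)$, and squaring finishes the case (one can take $c_0=\tfrac34$). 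I expect this pointwise estimate to be the main obstacle --- in particular controlling it when $\|x\|$ and $\|y\|$ differ wildly in size, where the convex-combination bound collapses and only the a~priori quadratic growth of $\delta_X$ keeps the leading coefficient under control.

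Given the pointwise estimate, I would take $f,g\in B_{L_2(X)}$ with $\|f-g\|_{L_2(X)}\geq\e$, apply it at $(f(t),g(t))$, and integrate over $[0,1]$. Writing $m(t)=\tfrac{\|f(t)\|+\|g(t)\|}{2}$ and $\rho(t)=\tfrac{\|f(t)-g(t)\|}{\|f(t)\|+\|g(t)\|}\in[0,1]$, so that $\|f(t)-g(t)\|=2m(t)\rho(t)$, this yields
$$\left\|\tfrac{f+g}{2}\right\|_{L_2(X)}^2\leq 1-c_0\int_0^1\delta_X(\rho(t))\,m(t)^2\,dt,$$
while $\int_0^1 m^2\leq1$ and $\int_0^1 m^2\rho^2=\tfrac14\|f-g\|_{L_2(X)}^2\geq\e^2/4$. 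It remains to bound the integral below by a multiple of $\delta_X(b\e)$. When $\delta_X$ has power type $q$, i.e. $\delta_X(\e)\geq c\e^q$ --- which can be arranged after renorming when $X$ is superreflexive, and which is the case relevant to the main theorem --- Hölder's inequality with exponents $q/2$ and $q/(q-2)$ makes this transparent: $\big(\int_0^1 m^2\rho^q\big)^{2/q}\geq\int_0^1 m^2\rho^2\geq\e^2/4$, so $\int_0^1\delta_X(\rho)m^2\geq c\int_0^1 m^2\rho^q\geq c\,2^{-q}\e^q$ and $\delta_{L_2(X)}(\e)\gtrsim\e^q$. For a general Banach space one first replaces $\delta_X$ by a comparable modulus with better convexity properties --- a classical regularization of the modulus of convexity --- and then runs a Jensen-type argument against the probability measure $m^2\,dt/\|m\|_{L_2}^2$. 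This last passage, recovering $\delta_X(b\e)$ from the integrated defect in the presence of a possible concentration of the mass $m^2\,dt$ on a tiny set, is the second delicate point, and Hölder's inequality (power-type case) or the regularization-and-Jensen argument (general case) is precisely what neutralizes it.
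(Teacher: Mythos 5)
The paper does not prove this theorem; it is cited directly from Lindenstrauss--Tzafriri (Figiel, Figiel--Pisier), so I am comparing your argument against the known proof rather than against anything in the paper itself.

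Your overall strategy --- a pointwise ``squared-norm'' Clarkson-type inequality
$$
\Bigl\|\tfrac{x+y}{2}\Bigr\|^2+c_0\,\delta_X\!\Bigl(\tfrac{\|x-y\|}{\|x\|+\|y\|}\Bigr)\Bigl(\tfrac{\|x\|+\|y\|}{2}\Bigr)^2\leq\tfrac{\|x\|^2+\|y\|^2}{2},
$$
followed by integration over $[0,1]$ --- is exactly the route of Figiel's proof, and your derivation of the pointwise estimate via the two cases, the convex-combination form of uniform convexity, and the bound $\|x/\|x\|-y/\|y\|\|\geq 2\e-2r>\e$ checks out with $c_0=3/4$. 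The power-type case, via H\"older against $m^2\,dt$, is fully rigorous and yields $\delta_{L_2(X)}(\e)\geq c'\e^q$; that is the only consequence the paper actually uses (it concludes each $\mathfrak{Z}_{n,j}$ is $q$-convex with a uniform constant).

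The one genuine soft spot is the last sentence. ``Regularize $\delta_X$ to a comparable convex modulus and apply Jensen against $m^2\,dt/\|m\|_{L_2}^2$'' does not work as stated: with the normalized measure $\bar\nu$ one only controls $\int\rho\,d\bar\nu\geq\int\rho^2\,d\bar\nu\geq\e^2/(4\|m\|_2^2)$, so Jensen with a convex $\delta_X$ yields $\delta_X(\e^2/(4\|m\|_2^2))$ together with a prefactor $\|m\|_2^2\geq\e^2/4$, and one is left with a spurious $\e^2$ against $\delta_X(b\e)$, which cannot be absorbed when $\delta_X(\e)\asymp\e^2$. What is actually needed is the specific, nontrivial structural fact due to Figiel that $\delta_X(\e)/\e^2$ is \emph{quasi-increasing}, i.e.\ $\delta_X(\e_1)/\e_1^2\leq L\,\delta_X(\e_2)/\e_2^2$ for $0<\e_1\leq\e_2\leq2$ and a universal $L$ --- equivalently, that $\delta_X$ is comparable to a function that is convex \emph{as a function of $\e^2$}, not merely of $\e$. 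With that lemma the argument closes cleanly: writing $\nu=m^2\,dt$, the set $S=\{\rho\geq\e/4\}$ satisfies $\int_S\rho^2\,d\nu\geq 3\e^2/16$, and on $S$ one has $\delta_X(\rho)\geq L^{-1}\rho^2\,(\e/4)^{-2}\delta_X(\e/4)$, whence $\int\delta_X(\rho)\,d\nu\geq(3/L)\delta_X(\e/4)$ and $\delta_{L_2(X)}(\e)\geq\tfrac{3c_0}{2L}\delta_X(\e/4)$ with no extra power of $\e$. Your power-type H\"older argument is precisely the special case where $\rho\mapsto\rho^q$ is convex in $\rho^2$ because $q\geq2$. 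So: the blueprint is right and matches the cited source, the power-type case is complete, but the reduction of the general case should invoke Figiel's quasi-monotonicity of $\delta_X(\e)/\e^2$ by name rather than an unspecified ``regularization'' --- a regularization to a merely convex modulus is not enough.
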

\noindent
Consequently, for any $q\geq 2$, $X$ is $q$-convex if and only if so is $L_2(X)$, which in turn implies that $L_2(X^\ast)$ is $q$-convex whenever $X$ is $p$-smooth and $p$, $q$ are conjugate exponents. Note also that the~same conclusions about behavior of the~modulus of convexity hold true for any $\ell_2$-sum of finitely many copies of $X$, as it linearly and isometrically embeds into $L_2(X)$. We will make use of these observations in the~proof of the~Main Theorem.

Several characterizations of superreflexive spaces in terms of certain combinatorial properties of norm were given by James. One of them (\cite{james1}, \cite{james2}) states that $X$ is superreflexive if and only if given any $\e>0$ there is $n\in\N$ such that for any vectors $x_1,\ldots ,x_n\in B_X$ there exists $1\leq k\leq n$ and $y\in\conv\{x_1,\ldots,x_k\}$, $z\in\conv\{x_{k+1},\ldots,x_n\}$ with $\n{y-z}<\e$. In the~next section, we provide a~strengthening of this condition which is based on a~certain Clarkson-type inequality for two equivalent norms on a~superreflexive space (see Lemma~\ref{lem: difference of averages}). For more information on superreflexive spaces, see {\it e.g.} \cite[Ch.~9]{FHHMZ} and the~references therein.


We shall also need two facts about approximation. The~first one is a~deep theorem of H\'ajek and Johanis on approximation of Lipschitz functions by smooth functions on sufficiently smooth Banach spaces. By $C^k(X)$ we denote the~space of all $k$-times continuously Fr\'echet differentiable functions on a~Banach space $X$. Recall that a~function $\varphi\colon X\to\R$ is called a~{\it bump function} if the~set $\{x\in X\colon\varphi(x)\not=0\}$ is nonempty and bounded. The~already mentioned renorming theorems for superreflexive spaces imply that every such space admits an~equivalent Fr\'echet differentiable norm (see {\it e.g.} \cite[Thm.~9.14]{FHHMZ}) and hence it admits a~Lipschitz $C^1$-smooth bump function (see \cite[Fact~I.2.1]{DGZ}). Therefore, the~following theorem applies to all superreflexive Banach spaces with $k=1$.

\begin{thm}[{\cite[Cor.~8]{HJ}}]
\label{thm: approximation by differentiable functions}
	Let $X$ be a~separable normed space that admits a~$C^k$-smooth Lipschitz bump function, for some $k\in\N\cup\{0\}$. There exists a~constant $K\geq 1$ depending only on $X$ such that for every $L$-Lipschitz function $f\colon X\to\R$ and any $\e>0$ there exists a~$KL$-Lipschitz function $g\in C^k(X)$ such that $\sup_{x\in X}\abs{f(x)-g(x)}<\e$.
\end{thm}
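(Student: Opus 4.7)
The plan is to construct $g$ as a weighted sum $\sum_n f(x_n)\Phi_n$, where $\{\Phi_n\}$ is a $C^k$-smooth partition of unity subordinate to a sufficiently fine covering of $X$ by balls of radius $r$ around well-chosen centers $x_n\in X$, with $r$ controlled by $\e$ and $L$.

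First, from the given $C^k$-smooth Lipschitz bump $\psi\colon X\to[0,1]$, by translation, rescaling, and composition with a smooth cut-off on $\R$, one manufactures for every $x_0\in X$ and $r>0$ a $C^k$-smooth function $\varphi_{x_0,r}\colon X\to[0,1]$ with $\varphi_{x_0,r}\equiv 1$ on $B(x_0,r/2)$, $\supp\varphi_{x_0,r}\subset B(x_0,r)$, and $\Lip(\varphi_{x_0,r})\leq K_0/r$, where $K_0$ depends only on $\psi$. Given $\e>0$ and an $L$-Lipschitz $f\colon X\to\R$, set $r:=\e/L$, and using separability of $X$ together with the bumps just built, produce a $C^k$-smooth partition of unity $\{\Phi_n\}_{n\in\N}$ with centers $\{x_n\}\subset X$ such that $\supp\Phi_n\subset B(x_n,r)$, $\sum_n\Phi_n\equiv 1$, $\Lip(\Phi_n)\leq K_1/r$, and the family $\{\supp\Phi_n\}$ has multiplicity uniformly bounded by a constant $N$ independent of $r$; here $K_1$ and $N$ depend only on $X$.

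Define $g(x):=\sum_n f(x_n)\Phi_n(x)$. Local finiteness yields $g\in C^k(X)$. Since $\Phi_n(x)\neq 0$ implies $\n{x-x_n}<r$,
\[
\abs{g(x)-f(x)}\leq\sum_n\abs{f(x_n)-f(x)}\Phi_n(x)\leq Lr=\e.
\]
Differentiating and using $\sum_n\Phi_n'(x)=0$, we obtain $g'(x)=\sum_n\bigl(f(x_n)-f(x)\bigr)\Phi_n'(x)$, a sum with at most $N$ non-vanishing terms, each of norm bounded by $Lr\cdot K_1/r=LK_1$. Hence $\n{g'(x)}\leq NLK_1$ for all $x\in X$, so $g$ is $NK_1L$-Lipschitz, and one may take $K:=NK_1$.

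The hard part is the second step: producing a $C^k$-smooth partition of unity whose supports have multiplicity uniformly bounded independently of the mesh $r$. In an infinite-dimensional Banach space this is delicate and forms the technical heart of the Hájek--Johanis argument; it relies on an inductive geometric construction over successive refinements of an $r$-net, combined with a careful bookkeeping of the Lipschitz constants of the constituent bumps. All other ingredients---smoothness of the bump, separability for the enumeration, and the elementary mean-value estimate---are comparatively standard.
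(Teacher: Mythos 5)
The theorem you are proving is quoted in the paper as an external result, \cite[Cor.~8]{HJ}, without an in-paper proof, so there is no internal argument to compare against; what follows is therefore a review of your proposal on its own terms.

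Your proposal has a genuine gap, and it is precisely at the step you flag as ``the hard part''---but the obstruction is stronger than you suggest: the object you want in step 2 does not exist. In an infinite-dimensional normed space $X$ there is \emph{no} cover of $X$ by balls of a fixed radius $r$ whose point-multiplicity is bounded by a constant $N$ depending only on $X$. Indeed, intersecting such a cover with an $n$-dimensional affine subspace $A\subset X$ yields a cover of $A\cong\R^n$ by open convex sets of diameter at most $2r$; by the Lebesgue covering lemma (the lower bound in $\dim[0,1]^n=n$), any such cover of a cube of side slightly larger than $2r$ has multiplicity at least $n+1$. Since $n$ is arbitrary, no uniform bound $N$ exists, and by scaling the conclusion is the same for every $r>0$. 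Consequently a partition of unity $\{\Phi_n\}$ with $\supp\Phi_n\subset B(x_n,r)$, $\Lip(\Phi_n)\leq K_1/r$, $\sum_n\Phi_n\equiv 1$ and multiplicity $\leq N$ cannot be produced, and the estimate $\n{g'(x)}\leq N L K_1$---the whole point of the construction---collapses: with only local finiteness, the number of nonzero terms in $g'(x)=\sum_n\bigl(f(x_n)-f(x)\bigr)\Phi_n'(x)$ is unbounded as $x$ varies, and $g$ need not be Lipschitz with any constant depending only on $X$ and $L$.

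Your closing paragraph therefore mischaracterizes the difficulty. It is not a matter of ``careful bookkeeping'' in an inductive refinement of an $r$-net; the bounded-multiplicity sum-partition strategy is structurally impossible in infinite dimensions, which is exactly why the result is nontrivial and why the control on the Lipschitz constant in \cite[Cor.~8]{HJ} (emphasized in the present paper as the crucial advantage over \cite[Cor.~3]{cepedello}) is delicate. The actual H\'ajek--Johanis argument avoids summation over an overlapping family altogether and is built on Lipschitz, $C^k$-smooth \emph{sup}-partitions of unity and related smooth variational devices: a supremum of uniformly Lipschitz functions is Lipschitz with the same constant regardless of how many functions participate, so the Lipschitz bound is obtained for free and the genuine work goes into smoothing the supremum while retaining control. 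Steps 1, 3, 4 and 5 of your outline are fine as written and would indeed complete the proof in the finite-dimensional setting (where bounded-multiplicity ball covers do exist), but the theorem is needed here for infinite-dimensional $X$, where step 2 must be replaced wholesale.
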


It is worth mentioning that Theorem~\ref{thm: approximation by differentiable functions} was preceded by a~result of Cepedello-Boiso quoted below. However, the~crucial advantage of the~H\'ajek--Johanis theorem lies in the~fact that it gives a~control on the~Lipschitz constant of the~approximating function $g$ which will be of great importance in the~proof of the~Main Theorem.
\begin{thm}[{\cite[Cor.~3]{cepedello}}]
Let $X$ be a~superreflexive Banach space and let $\alpha\in (0,1]$ be such that $X$ is $(1+\alpha)$-smooth. Then for every Lipschitz function $f\colon X\to\R$ and any $\e>0$ there exists a~Fr\'echet differentiable map $g\colon X\to\R$ with its derivative $\alpha$-H\"older on bounded sets and such that $\sup_{x\in X}\abs{f(x)-g(x)}<\e$.
\end{thm}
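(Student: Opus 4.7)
The natural approach is a~Lasry--Lions-type double infimal/supremal convolution with a~smooth kernel built from the norm of $X$. After replacing the norm by the $(1+\alpha)$-smooth renorming supplied by Pisier's theorem (harmless, since we only ask for a~uniform approximation), a~standard computation using $\varrho_X(\tau)=O(\tau^{1+\alpha})$ shows that $\phi(x):=\n{x}^{1+\alpha}$ is Fr\'echet differentiable on $X$ with $\alpha$-H\"older derivative on bounded sets. This furnishes a~canonical smooth coercive kernel to convolve $f$ with.

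\textbf{Construction and uniform approximation.} For parameters $0<\lambda<\mu$ to be fixed below, define
$$h(z)=\inf_{y\in X}\bigl[f(y)+\lambda\phi(z-y)\bigr],\qquad g(x)=\sup_{z\in X}\bigl[h(z)-\mu\phi(x-z)\bigr].$$
Reflexivity of $X$, together with the coercivity of $\phi$ and the Lipschitz bound on $f$, forces both extrema to be attained in bounded subsets of $X$ whose diameters depend only on $\Lip(f)$ and on $\lambda$, resp.\ $\mu$. Elementary Young-type estimates then yield $\pn{f-h}{\infty} = O(\lambda^{-1/\alpha})$ and $\pn{g-h}{\infty} = O(\mu^{-1/\alpha})$, with implicit constants depending on $\Lip(f)$ and $\alpha$; choosing $\lambda<\mu$ both sufficiently large gives $\pn{g-f}{\infty}<\e$.

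\textbf{Regularity---the main obstacle.} The delicate part is verifying that $g$ is Fr\'echet differentiable with $\alpha$-H\"older derivative on bounded sets. The inf-convolution endows $h$ with \emph{semiconcavity}: if $y_0$ is a~minimizer of the defining infimum at $z_0$, then $z\mapsto h(z)-\lambda\phi(z-y_0)$ attains its maximum at $z_0$, so $h$ admits a~smooth upper envelope modelled on $\lambda\phi(\,\cdot-y_0)$ at that point. Symmetrically, the sup-convolution yields \emph{semiconvexity} for $g$ with a~matching lower envelope modelled on $\mu\phi$. Combined with the Fr\'echet smoothness and $\alpha$-H\"older modulus of $\phi'$, these two matching one-sided bounds pin both the Fr\'echet subdifferential and superdifferential of $g$ at each point to a~single linear functional which varies $\alpha$-H\"older-continuously with the base point on bounded subsets of $X$. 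This is the classical Lasry--Lions mechanism; in a~Banach rather than Hilbert setting it requires genuine care in exploiting the $(1+\alpha)$-smoothness hypothesis, and is where essentially all of the technical work of the proof would reside.
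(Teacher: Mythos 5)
The paper does not prove this statement; it is quoted verbatim from Cepedello-Boiso \cite{cepedello} purely for context, and the authors immediately set it aside in favor of the H\'ajek--Johanis theorem, which---unlike the present result---controls the Lipschitz constant of the approximant $g$, the feature actually needed for the Main Theorem. There is therefore no internal proof to measure your attempt against.

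That said, your outline---Pisier renorming followed by a Lasry--Lions double inf/sup-convolution with kernel $\phi=\|\cdot\|^{1+\alpha}$---is exactly the method of the cited reference, whose title already announces ``regularization \dots\ by infimal convolution formulas,'' so you have identified the right strategy. Two of your auxiliary claims deserve flagging. First, that $\phi$ is Fr\'echet differentiable with $\alpha$-H\"older derivative on bounded sets is not merely a ``standard computation'' from the modulus estimate $\varrho_X(\tau)=O(\tau^{1+\alpha})$: one needs the renormed norm to have $\alpha$-H\"older derivative on the unit sphere, and then a short case analysis to propagate the estimate across the origin, where $\|\cdot\|$ itself is not differentiable but $\phi'$ vanishes with an $\alpha$-H\"older rate. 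Second, reflexivity does not by itself make the inf and sup attained---a Lipschitz function need not be weakly lower semicontinuous---but this is harmless, since the envelope estimates and the semiconcavity/semiconvexity argument in the Lasry--Lions scheme only require $\varepsilon$-optimal points, not exact ones. With these caveats, your sketch captures the content of the cited result.
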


The~second tool of approximation theory that we need is a~rather easy lemma on approximating uniformly continuous functions by Lipschitz ones.
\begin{lem}[{\cite[Ch.~7, Lemma~40]{HJ_book}}]
\label{lem: approximation by Lipschitz functions}
Let $(M,d)$ be a~metric space and let $f\colon M\to\R$ be a~uniformly continuous function with modulus of continuity
$$
\omega_f(t)=\sup\bigl\{\abs{f(x)-f(y)}\colon x,y\in M,\, d(x,y)\leq t\bigr\}\quad (t\geq 0).
$$
Assume that $\omega\colon [0,\infty)\to [0,\infty)$ is a~subadditive modulus of $f$, i.e. $\omega$ is nondecreasing, continuous at zero, $\omega(0)=0$, $\omega_f\leq\omega$ and $\omega(t+u)\leq\omega(t)+\omega(u)$ for all $t,u\in [0,\infty)$. Given any $\e, a>0$ with $\omega(a)\leq\e$, there exists an~$\frac{\e}{a}$-Lipschitz function $g\colon M\to\R$ such that $\sup_{x\in M}\abs{f(x)-g(x)}<\e$.
\end{lem}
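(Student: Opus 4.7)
The plan is to use the McShane-type inf-convolution
$$
g(x):=\inf_{y\in M}\Bigl[f(y)+\tfrac{\e}{a}\,d(x,y)\Bigr]\qquad(x\in M).
$$
For each fixed $y\in M$, the map $x\mapsto f(y)+(\e/a)d(x,y)$ is $(\e/a)$-Lipschitz by the triangle inequality, and a pointwise infimum of an equi-$(\e/a)$-Lipschitz family of real functions is again $(\e/a)$-Lipschitz, provided it is finite. Choosing $y=x$ in the infimum gives $g(x)\leq f(x)$ immediately, which delivers one half of the approximation bound.

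The core step is to promote the pointwise hypothesis $\omega(a)\leq\e$ to a global affine majorant of $\omega$. By subadditivity, $\omega(na)\leq n\omega(a)$ for every $n\in\N$, and the monotonicity of $\omega$ then gives, for $t\geq 0$ and $n=\lceil t/a\rceil\leq 1+t/a$,
$$
\omega(t)\leq\omega(na)\leq n\omega(a)\leq\e+\tfrac{\e}{a}\,t.
$$
Combining this with $|f(x)-f(y)|\leq\omega_f(d(x,y))\leq\omega(d(x,y))$ and rearranging yields $f(y)+(\e/a)d(x,y)\geq f(x)-\e$ for every $y\in M$. Taking the infimum over $y$ shows that $g(x)\geq f(x)-\e$, which simultaneously certifies that $g$ is real-valued (hence genuinely $(\e/a)$-Lipschitz) and furnishes the other half of the bound $\sup_{x\in M}|f(x)-g(x)|\leq\e$.

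To obtain the strict inequality required by the statement, it suffices to apply the above construction with $\e$ replaced by some $\tilde\e\in[\omega(a),\e)$, for instance $\tilde\e:=\tfrac12(\omega(a)+\e)$ when $\omega(a)<\e$; the resulting $g$ is then $(\tilde\e/a)$-Lipschitz, hence a~fortiori $(\e/a)$-Lipschitz, and satisfies $\sup|f-g|\leq\tilde\e<\e$. I do not anticipate any substantial obstacle here: the whole argument is essentially an exercise in McShane's extension technique, and the only real piece of content is the upgrade, via subadditivity of $\omega$, of the pointwise hypothesis $\omega(a)\leq\e$ to the global affine bound $\omega(t)\leq\e+(\e/a)t$, which is precisely what makes uniform Lipschitz approximation of a uniformly continuous function possible.
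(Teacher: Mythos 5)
Your argument is the standard McShane inf-convolution: define $g(x)=\inf_{y\in M}\bigl[f(y)+\frac{\e}{a}d(x,y)\bigr]$, use iterated subadditivity to upgrade the pointwise bound $\omega(a)\leq\e$ to the global affine majorant $\omega(t)\leq\e+\frac{\e}{a}t$, and deduce $g\leq f\leq g+\e$, which certifies both that $g$ is finite (hence genuinely $\frac{\e}{a}$-Lipschitz) and that $\sup_{M}|f-g|\leq\e$. This is correct, and it is essentially the argument behind the cited result; note that the paper itself does not reproduce a proof of this lemma, it only refers to the H\'ajek--Johanis book.

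The one defect is in the passage from the non-strict bound to the strict one required by the statement. Your proposed fix, replacing $\e$ by a smaller $\tilde\e\in[\omega(a),\e)$, works only when $\omega(a)<\e$; when $\omega(a)=\e$ that interval is empty, and you flag this yourself by writing ``when $\omega(a)<\e$'' without handling the remaining case, which is allowed by the hypothesis. The universal fix is to shift rather than shrink: since your argument gives $g(x)\leq f(x)\leq g(x)+\e$ for every $x\in M$, the function $\tilde g:=g+\frac{\e}{2}$ satisfies $\sup_{M}|f-\tilde g|\leq\frac{\e}{2}<\e$ and has the same Lipschitz constant $\frac{\e}{a}$. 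With that modification the proof is complete and covers all $\omega(a)\leq\e$ uniformly.
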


Note that if $M$ above is a~convex subset of a~normed linear space (the~situation to which Lemma~\ref{lem: approximation by Lipschitz functions} will be applied), then the~minimal modulus $\omega_f$ of $f$ is subadditive.

\section{Proof of the Main Theorem}
\noindent
We start by the~announced lemma which strengthens James' characterization of superreflexivity and generalizes the~observation based on geometry of Hilbert spaces used in the~original Bourgain's result ({\it cf. }the~proof of \cite[Lemma 2]{bourgain1} and \cite[Lemma~III.D.32]{W_book}).
\begin{lem}
\label{lem: difference of averages}
	Let $(X,\ndot)$ be a~$q$-convex Banach space, $q\in[2,\infty)$, and let $n\in\N$ and $x_1,\ldots,x_n\in B_X$. Then there exist nonempty sets $A,B\subset\{1,\ldots,n\}$ with $\max A<\min B$ such that
	$$
	\Biggl\|\frac{1}{\abs{A}}\sum_{i\in A}x_i-\frac{1}{\abs{B}}\sum_{j\in B}x_j\Biggr\|\leq\frac{\gamma}{(\log_2 n)^{1/q}},
	$$
	where $\gamma\geq 1$ is a~constant depending only on $X$.
\end{lem}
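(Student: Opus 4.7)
The plan is to argue by contradiction, adapting Bourgain's averaging-tree scheme (cf.\ \cite[Lemma~III.D.32]{W_book}) from the Hilbert case by replacing the parallelogram identity with the Clarkson--Pisier quantitative form of $q$-convexity. A preliminary reduction to $n=2^m$, where $m=\lfloor\log_2 n\rfloor$ and one restricts attention to $x_1,\ldots,x_{2^m}$, loses only a factor of $2^{1/q}$ in the final constant, so I assume $n=2^m$ from the outset.

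For $k\in\{0,\ldots,m\}$ and $j\in\{1,\ldots,2^{m-k}\}$ set $I_{k,j}=\{(j-1)2^k+1,\ldots,j2^k\}$ and $y_{k,j}=2^{-k}\sum_{i\in I_{k,j}}x_i$, so that each $y_{k,j}\in B_X$ and the tree identity $y_{k,j}=\tfrac12(y_{k-1,2j-1}+y_{k-1,2j})$ holds. The central analytic input is the well-known equivalence, up to constants, between $q$-convexity of $(X,\ndot)$ and the existence of a constant $K=K(X)\geq 1$ satisfying
\begin{equation*}
\n{\tfrac{u+v}{2}}^q+K^{-q}\n{\tfrac{u-v}{2}}^q\leq\tfrac12\bigl(\n{u}^q+\n{v}^q\bigr)\qquad(u,v\in X).
\end{equation*}
Applying this with $u=y_{k-1,2j-1}$, $v=y_{k-1,2j}$, summing over $j$, and introducing the level-$k$ average of $q$-th powers $T_k:=2^{k-m}\sum_{j=1}^{2^{m-k}}\n{y_{k,j}}^q$, one obtains the recursion
\begin{equation*}
T_{k-1}\geq T_k+\frac{2^{-q}K^{-q}}{2^{m-k}}\sum_{j=1}^{2^{m-k}}\n{y_{k-1,2j-1}-y_{k-1,2j}}^q.
\end{equation*}

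Now suppose the conclusion of the lemma fails for some $\gamma$ to be determined. Since every pair $(I_{k-1,2j-1},I_{k-1,2j})$ is an admissible candidate for $(A,B)$, the failure forces $\n{y_{k-1,2j-1}-y_{k-1,2j}}>\gamma/m^{1/q}$ for all $k,j$. Inserting this into the recursion yields $T_{k-1}-T_k\geq 2^{-q}K^{-q}\gamma^q/m$; telescoping over $k=1,\ldots,m$ gives $T_0\geq T_m+2^{-q}K^{-q}\gamma^q$. Since $T_0=\tfrac{1}{2^m}\sum_i\n{x_i}^q\leq 1$ and $T_m\geq 0$, one concludes $\gamma\leq 2K$. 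Fixing $\gamma$ strictly larger than $2K$ (with a further factor of $2^{1/q}$ absorbed to compensate for the dyadic reduction) contradicts the assumed failure, and the resulting $\gamma$ depends only on $X$, as required.

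The main obstacle I foresee is really one of bookkeeping rather than conceptual: invoking the Clarkson--Pisier inequality displayed above and tracking that the constant $K$ is intrinsic to the geometry of $(X,\ndot)$. Once that inequality is in hand, the tree computation is purely formal and uses only $x_i\in B_X$, nothing about their mutual arrangement; the combinatorial core of James' characterization is effectively absorbed into the single convexity estimate.
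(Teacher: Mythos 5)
Your proof is correct and follows essentially the same route as the paper: the core is Pisier's Clarkson-type quantitative form of power-type uniform convexity (\cite[Prop.~2.4]{pisier}), fed into a dyadic averaging tree whose telescoped $q$-th power decrements force some level to have small defect of order $(\log_2 n)^{-1/q}$. The paper packages the levels via quantities $\alpha_k=\sup$ and $\beta_k=\inf$ over \emph{all} ordered subsets of size $2^k$ rather than the fixed dyadic intervals, which lets it avoid the preliminary truncation to $n=2^m$, and it absorbs the Pisier constant into a rescaled norm $\tndot$ rather than keeping it as $K^{-q}$; these are purely presentational differences and the resulting constant is of the same form.
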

\begin{proof}
	By Pisier's results \cite[Prop.~2.4, Thm~3.1]{pisier}, there exist a~constant $C\geq 1$ and a~norm $\tndot$ on $X$ such that $\n{x}\leq\tn{x}\leq C\n{x}$ for $x\in X$ and that
	\begin{equation}\label{L1}
		\tnbig{\frac{x+y}{2}}^q\leq\frac{\tn{x}^q+\tn{y}^q}{2}-\Biggl\|\frac{x-y}{2}\Biggr\|^q\quad\mbox{for all }x,y\in X.
	\end{equation}
	For any given $x_1,\ldots,x_n\in B_X$ define
	$$
	\alpha_k=\sup\Biggl\{\tnbig{\frac{1}{\abs{A}}\sum_{i\in A}x_i}\colon\abs{A}=2^k,\, A\subset\{1,\ldots,n\}\Biggr\}
	$$
	and
	$$
	\beta_k=\inf\Biggl\{\Biggl\|\frac{1}{\abs{A}}\sum_{i\in A}x_i-\frac{1}{\abs{B}}\sum_{j\in B}x_j\Biggr\|\colon \abs{A}=\abs{B}=2^k,\, \max A<\min B,\, A,B\subset\{1,\ldots,n\}\Biggr\}
	$$
	for $k=0,1,\ldots,\lfloor\log_2n\rfloor-1$. By inequality \eqref{L1} we have 
	$$
	\max\bigl\{\tn{x}^q,\tn{y}^q\bigr\}\geq\tnbig{\frac{x+y}{2}}^q+\Biggl\|\frac{x-y}{2}\Biggr\|^q,
	$$
	hence $\alpha_k^q\geq \alpha_{k+1}^q+2^{-q}\beta_k^q$ for each $k=0,1,\ldots,\lfloor\log_2n\rfloor-2$. Therefore,
	$$
	\alpha_0^q\geq\alpha_1^q+2^{-q}\beta_0^q\geq\ldots\geq \alpha_{\lfloor\log_2n\rfloor-1}^q+2^{-q}\!\!\sum_{k=0}^{\lfloor\log_2n\rfloor-2}\beta_k^q
	$$
	and, since $\alpha_0\leq C$ and $\alpha_{\lfloor\log_2n\rfloor-1}\geq\frac{1}{2}\beta_{\lfloor\log_2n\rfloor-1}$, we obtain
	$$
	\sum_{k=0}^{\lfloor\log_2n\rfloor-1}\beta_k^q\leq C^q\cdot 2^q.
	$$
	Consequently, there must exist $0\leq k<\lfloor\log_2n\rfloor$ such that 
	$$
	\beta_k^q\leq\frac{C^q\cdot 2^q}{\lfloor\log_2n\rfloor}
	$$
	and the~assertion follows.
\end{proof}
\begin{rem}\label{rem: Pisier's constant}
An inspection of Pisier's proof of \cite[Thm.~3.1]{pisier} shows that the~constant $C\geq 1$ (and hence also the~resulting constant $\gamma$) depends only on the~behavior of the~modulus of convexity of $X$, more precisely, on the~constants $c>0$ and $q\geq 2$ for which we have $\delta_X(\e)\geq c\e^q$ for $\e\in (0,2]$ (see the~proof of \cite[Prop.~2.4]{pisier}).
\end{rem}

In the~proof of Theorem \ref{thm: technical theorem} below, we shall use the~following simple observation (see also \cite[pp.~170--171]{W_book}): If we divide the~set 
$
\set{(n,j)}{n\in\N, 1\leq j\leq n}
$
into finitely many subsets, then at least one of them must contain an~infinite subset
$
\set{(n_i,j_i^s)}{i\in\N, 1\leq s\leq i}
$
such that $(n_i)_{i=1}^\infty$ is a~strictly increasing sequence in $\N$ and $1\leq j_i^1<\dots<j_i^i\leq n_i$ for each $i\in\N$. 

\begin{thm}
\label{thm: technical theorem}
Let $X$ be a~superreflexive Banach space and let $M\subset X$ be a~compact set with $0\in M$. If $\Gamma\subset\free{M}$ is bounded and not relatively weakly compact, then there exists a~WUC series $\sum_{k=1}^\infty \varrho_k$ in $\Lipz{M}$ such that 
$$\limsup_{k\to\infty}\,\sup\bigl\{\abs{\ip{\mu}{\varrho_k}}\colon \mu\in \Gamma\bigr\}>0.$$
\end{thm}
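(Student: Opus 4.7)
The plan is to follow Bourgain's construction of a WUC-witness to the failure of relative weak compactness (as exposed in \cite[Lemma~III.D.32]{W_book}), but with Hilbert-space geometry replaced by Lemma~\ref{lem: difference of averages} applied in a $q$-convex Pisier renorming of $X$, and with $C^1$-bumps replaced by Lipschitz bumps regularised via Theorem~\ref{thm: approximation by differentiable functions}. Since $M$ is compact, $\free{M}$ is separable; by Eberlein--\v{S}mulian I would extract from $\Gamma$ a sequence $(\mu_n)$ with no weakly convergent subsequence. Using the Kantorovich--Rubinstein representation of each $\mu_n$ as an $\ell_1$-series of normalised molecules $m(p,q)=(\delta(p)-\delta(q))/d(p,q)$, truncating the tails and relabelling indices, I may arrange $\mu_n = \sum_{j=1}^{n} a^n_j\, m(p^n_j, q^n_j) + \eta_n$ with $\sum_j \abs{a^n_j}$ uniformly bounded and $\n{\eta_n}_{\FF}\to 0$; in this form the data is organised on the index set $\{(n,j):1\leq j\leq n\}$ to which the combinatorial observation stated just above the theorem applies.

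Using compactness of $M$, I would fix finite $2^{-m}$-nets $\mathcal{N}_m$ of $M$ and colour each pair $(n,j)$ by a tuple encoding the nearest elements of the appropriate net to $p^n_j$ and $q^n_j$, a dyadic approximation of $d(p^n_j,q^n_j)$, and the sign of $a^n_j$. The palette is finite, so the combinatorial observation produces an infinite sub-triangle $\{(n_i, j_i^s) : i\in\N,\,1\leq s\leq i\}$ on which the colour is constant. Passing to a $q$-convex renorming of $X$ and applying Lemma~\ref{lem: difference of averages} to the direction vectors $v_i^s := (p^{n_i}_{j_i^s}-q^{n_i}_{j_i^s})/d(p^{n_i}_{j_i^s},q^{n_i}_{j_i^s}) \in S_X$ yields, for each sufficiently large $i$, consecutive blocks $A_i, B_i \subseteq \{1,\dots,i\}$ with $\max A_i < \min B_i$ whose averaged direction vectors in $X$ are $\gamma/(\log_2 i)^{1/q}$-close. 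This controlled cancellation is the geometric engine of the construction.

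Each $\varrho_k$ of the target WUC series would be built as a Lipschitz bump on $X$ (then restricted to $M$) supported in a small neighbourhood of the cluster of $p$-endpoints indexed by $A_{i(k)}$ and approximately linear inside it (chosen so as to pair positively with the molecules of that block), and vanishing outside; Theorem~\ref{thm: approximation by differentiable functions} regularises the bump with only a universal multiplicative loss of Lipschitz constant, while Lemma~\ref{lem: approximation by Lipschitz functions} is available when the bump is first specified only on a net. Choosing the neighbourhoods for different $k$ to be pairwise disjoint and thin enough that no molecule of $\mu_{n_{i(k)}}$ outside the block $A_{i(k)} \cup B_{i(k)}$ interacts with them, the partial sums $\sum_k \e_k \varrho_k$ admit a universal Lipschitz bound for any signs $\e_k$, which is the criterion for being WUC in $\Lipz{M}$. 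Estimating $\ip{\mu_{n_{i(k)}}}{\varrho_k}$, block $A_{i(k)}$ contributes a uniformly positive amount by design, block $B_{i(k)}$ contributes only an $O((\log_2 i(k))^{-1/q})$ error from Lemma~\ref{lem: difference of averages}, the molecules outside both blocks do not interact with $\varrho_k$, and the tail $\eta_{n_{i(k)}}$ contributes $O(\n{\eta_{n_{i(k)}}}_{\FF})=o(1)$.

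The principal obstacle I expect is the coherent calibration of the three length scales involved---the net resolution from the colouring, the radius of the bump support, and the averaging-lemma error $\gamma/(\log_2 i)^{1/q}$---so that the Hájek--Johanis smoothing (with its multiplicative constant $K$) does not destroy the universal Lipschitz bound, so that the non-selected molecules of $\mu_{n_{i(k)}}$ genuinely miss the bump support (which may require refining the colouring to track the positions of all molecules simultaneously), and so that the $A$--$B$ cancellation leaves a $k$-independent lower bound. That Pisier's renorming constant depends only on the $q$-convexity data (Remark~\ref{rem: Pisier's constant}) and that $q$-convexity survives the passage to finite $\ell_2$-sums (Theorem~\ref{thm: Figiel-Pisier}), both emphasised in Section~2, will be essential to keep these constants uniform in $k$.
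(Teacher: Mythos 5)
Your proposal takes a genuinely different route from the paper, and I think it has gaps that are not fixable by the calibration concerns you flag at the end.

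The most serious issue is that you extract from $\Gamma$ only a sequence with no weakly convergent subsequence. The paper, following the \emph{proof} of the Eberlein--\v Smulyan theorem, extracts a biorthogonal double array $(f_1^n,\dots,f_n^n)\subset\Lipz{M}$, $(\mu_1^n,\dots,\mu_n^n)$ with $\ip{\mu_j^n}{f_k^n}\geq\xi$ for $k\leq j$ and $|\ip{\mu_j^n}{f_k^n}|\leq\xi/3$ for $j<k$. This array is what produces the quantitative lower bound $|\ip{\kappa_k}{z_k}|\geq\xi/3$ at every stage, because $\kappa_k=\mu_{\max A}^{M_k}$ pairs with all $f_l$, $l\in A$, at level $\geq\xi$ and with all $f_l$, $l\in B$, at level $\leq\xi/3$. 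Your construction has no substitute for this. You assert that ``block $A_{i(k)}$ contributes a uniformly positive amount by design,'' but the molecules indexed by $A_{i(k)}$ are a selected subfamily of molecules of a single $\mu_{n_{i(k)}}$, and nothing in the colouring (nearest net points, dyadic distance, sign) or in the combinatorial observation guarantees that this subfamily carries a mass of $\mu_{n_{i(k)}}$ bounded below; it could be arbitrarily small, and then so is the pairing, no matter how cleverly the bump is designed.

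Second, Lemma~\ref{lem: difference of averages} is applied to the wrong objects. You apply it to unit direction vectors $v_i^s\in S_X$ of the molecules. The paper applies it to the Fréchet-derivative fields $\Phi(f_l^{M_k})$ of the biorthogonal functions, composed with the molecule segments and viewed as elements of $\ell_2$-sums of Bochner spaces $\mathfrak{Z}_{n,j}$; the output is condition \ref{complicated inequality}, which says that $z_k=\frac{1}{2|A|}\sum_{l\in A}f_l^{M_k}-\frac{1}{2|B|}\sum_{l\in B}f_l^{M_k}$ has small total derivative along the molecule segments of any $\mu_j^n$ with $n>M_k$. This is used in the estimate labelled ``application of Bart Simpson'' to show the telescoping correction does not destroy the $\xi/3$ lower bound. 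Having the averaged \emph{directions} of two blocks of molecules close in $X$ neither implies that any function pairs small on block $B$ nor informs the construction of the WUC series, so the claim that the $B$-block contributes an $O((\log_2 i)^{-1/q})$ error does not follow.

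Finally, the WUC mechanism via pairwise disjoint bump supports cannot in general be arranged: $M$ is compact, so the endpoints of molecules of all the $\mu_n$'s may cluster at a single point, and there is no room for infinitely many disjoint neighbourhoods. The paper avoids this by the telescoping weights $\Psi_k=\prod_{i<k}(1-\|\Phi(z_i)(\cdot)\|_{X^\ast})$: one gets $\sum_k\Psi_k(p)\|\Phi(z_k)(p)\|_{X^\ast}\leq 1$ pointwise (the product telescopes), which yields the WUC bound regardless of where anything is supported. So the right picture of $\varrho_k$ is not a spatially localised bump but $\varrho_k=\varphi_kz_k\res{M}$, where $\varphi_k$ is a smoothing (via Lemma~\ref{lem: approximation by Lipschitz functions} and Theorem~\ref{thm: approximation by differentiable functions}) of the telescoping weight $\Psi_k$ and $z_k$ is a difference of averages of the biorthogonal $f_l$'s. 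The smoothing and the combinatorial stabilisation you describe do appear in the paper, but in the service of this very different structure.
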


\begin{proof}
As we have already noted, passing to an~equivalent norm of $X$ does not change the~isomorphism class of $\free{M}$. Therefore, by Pisier's theorem we may (and we do) assume that $X$ is $p$-smooth with some $p\in (1,2]$. Since $M$ is compact, we can also assume $X$ to be separable.

Following the~proof of the~Eberlein--\v Smulyan theorem (see {\it e.g.} \cite[\S II.C]{W_book}), we find constants $C\geq 1$ and $\xi>0$ and, for each $n\in\N$, sequences $$(f_1^n,\dots,f_n^n)\subset\Lipz{M}$$
and
$$(\mu_1^n,\dots,\mu_n^n)\subset\aspan\{\delta(p)\colon p\in M\}$$
satisfying the~following conditions:
\begin{align}
	\label{norm of mu}
\n{\mu_j^n}_{\FF}\leq C\,&\,\,\,\,\,\, \textup{ for }\, n\in\N\, \textup{ and }\, 1\leq j\leq n,\\
\label{norm of f}
\n{f_j^n}_{\Lipp}\leq 1\,&\,\,\,\,\,\, \textup{ for }\, n\in\N\, \textup{ and }\, 1\leq j\leq n,\\
\label{small to the left}
\left|\ip{ \mu_j^n}{f_k^n}\right|\leq\frac\xi 3\,& \,\,\,\,\,\,\textup{ for }\, j,k,n\in\N \,\textup{ with }\, 1\leq j<k\leq n,\\
\label{big to the right}
\ip{\mu_j^n}{f_k^n}\geq\xi\,&\,\,\,\,\,\, \textup{ for }\, j,k,n\in\N\,\textup{ with }\, 1\leq k\leq j\leq n,\\
\label{mu close to gamma}
\dist\left(\mu_j^n,\Gamma\right)\leq\frac {6C\xi}{\xi+48C}\,&\,\,\,\,\,\, \textup{ for }\, n\in\N\, \textup{ and }\, 1\leq j\leq n.
\end{align}
Due to the~aforementioned existence of norm-preserving extensions of Lipschitz functions, we may assume that all $f_k^n$'s are actually defined on $rB_X$ for some $r>0$ satisfying \mbox{$rB_{X}\supset M$.} Then we regard $\mu_k^n$'s as elements of $\free{rB_{X}}$ with supports in $M$. Next, by Theorem~\ref{thm: approximation by differentiable functions}, for each $n\in\N$ and $1\leq k\leq n$ there is a~$K$-Lipschitz $C^1$-smooth function which uniformly approximates $f_k^n$, where $K\geq 1$ depends only on $X$. So, after normalizing and adjusting $\xi$, we may moreover assume that $f_k^n$'s are continuously Fr\' echet differentiable. For future reference, when exact indices will not be clear or important, we denote $\Omega=\{\mu_j^n\colon n\in\N, 1\leq j\leq n\}$.

Write $\Lip_{0,F}\left(rB_{X}\right)$ for the~subspace of $\Lipz{rB_{X}}$ consisting of all continuously Fr\' echet differentiable functions and let $C_b\left(rB_X,X^\ast\right)$ be the~space of bounded continuous maps from $rB_{X}$ to $X^\ast$ equipped with the~norm 
$$
\|F\|_\infty=\sup\{\|F(p)\|_{X^\ast}\colon p\in rB_{X}\}\quad \left(F\in C_b\left(rB_X,X^\ast\right)\right).
$$
Let $\map{\Phi}{\Lip_{0,F}\left(rB_{X}\right)}{C_b\left(rB_X,X^\ast\right)}$ be the~Fr\'echet derivative map, that is,
$$
\Phi(f)(p)=\mathrm{d}f(p)\,\,\,\textup{ for }\,f\in\Lip_{0,F}\left(rB_{X}\right)\,\textup{ and }\,p\in rB_{X}.
$$
Then $\Phi$ is a~linear isometry because by the~mean value theorem (see \cite[Ch.~1, Prop.~65]{HJ_book}) we have $\n{f}_\Lipp\leq\n{\Phi f}_\infty$ for each $f\in\Lip_{0,F}(rB_X)$, whereas the~converse inequality is obvious by the~definition of the~Fr\'echet derivative. Denote by $\mathfrak{X}\subset C_b(rB_X,X^\ast)$ the~range of $\Phi$.

If we express the~norm on $\free{rB_{X}}$ via the~`metric formula' stated in the~previous section, for each $n\in\N$ and $1\leq j\leq n$ we can find $I_{n,j}\in\N$ and sequences
$$
\bigl(a^{(n,j)}_i\bigr)_{i=1}^{I_{n,j}}\subset \R\quad\mbox{ and }\quad\bigl(p^{(n,j)}_i,q^{(n,j)}_i\bigr)_{i=1}^{I_{n,j}}\subset \widetilde{M}
$$
such that 
\begin{equation}
\label{eq: representation of mu}
\mu^n_j=\sum_{i=1}^{I_{n,j}} a^{(n,j)}_i\bigl(\delta(p^{(n,j)}_i)-\delta(q^{(n,j)}_i)\bigr)
\end{equation}
and
\begin{equation}
\label{norm of extension}
\sum_{i=1}^{I_{n,j}} \bigl|a^{(n,j)}_i\bigr|\bigl\|p^{(n,j)}_i-q^{(n,j)}_i\bigr\|\leq 2\n{\mu_j^n}_{\FF}.
\end{equation}
Notice that for every $F\in\mathfrak{X}$ we have
$$
\ip{F}{(\Phi^{-1})^\ast\mu^n_j}=\ip{\mu_j^n}{\Phi^{-1}F}=\sum_{i=1}^{I_{n,j}}a_i^{(n,j)}\bigl(\Phi^{-1}F(p_i^{(n,j)})-\Phi^{-1}F(q_i^{(n,j)})\bigr),
$$
thus, by the~Newton--Leibniz formula, we infer that the~functional $\nu_j^n\in (C_b(rB_X,X^\ast))^\ast$ defined by 
\begin{equation}
\label{definition of nu}
\ip{F}{\nu_j^n}=\sum_{i=1}^{I_{n,j}} a^{(n,j)}_i\lint{0}{1}{\ip{p^{(n,j)}_i-q^{(n,j)}_i}{F\bigl(q^{(n,j)}_i+t\bigl(p^{(n,j)}_i-q^{(n,j)}_i\bigr)\bigr)}}{t} 
\end{equation}
is an~extension of $(\Phi^{-1})^\ast(\mu_j^n)\in\mathfrak{X}^\ast$. Moreover, inequality \eqref{norm of extension} implies that 
\begin{align}
\label{norm of nu depends on convM}
\nonumber
\left|\ip{F}{\nu_j^n}\right|&\leq\sum_{i=1}^{I_{n,j}} \bigl|a^{(n,j)}_i\bigr|\lint{0}{1}{\bigl\| F\bigl(q^{(n,j)}_i+t\bigl(p^{(n,j)}_i-q^{(n,j)}_i\bigr)\bigr)\bigr\|_{X^\ast}\bigl\|p^{(n,j)}_i-q^{(n,j)}_i\bigr\|}{t}\\
&\leq 2\|\mu_j^n\|_{\FF}\sup_{p\in\overline{\conv}M}\|F(p)\|_{X^\ast},
\end{align}
whence $\|\nu_j^n\|\leq 2\n{\mu_j^n}_{\FF}$.

For any pair $(n,j)$ with $n\in\N$ and $1\leq j\leq n$ we consider the~Banach space
$$
\mathfrak{Z}_{n,j}=\Biggl(\bigoplus_{i=1}^{I_{n,j}}L_2([0,1],X^\ast)\Biggr)_{\!\!\ell_2}.
$$
In view of the~remarks following Theorem~\ref{thm: Figiel-Pisier}, the~$p$-smoothness of $X$ yields that every such space is $q$-convex with $q\in [2,\infty)$ being the~conjugate exponent to $p$. Moreover, Theorem~\ref{thm: Figiel-Pisier} implies that $\delta_{\mathfrak{Z}_{n,j}}(\e)\geq c\e^q$ for each $\e\in (0,2]$ and with a~constant $c>0$ common for all $(n,j)$'s. 

Fix a~sequence $(\varepsilon_k)_{k=1}^\infty\subset(0,1)$ such that 
\begin{equation}
\label{eq:sum epsilon}
\sum_{k=1}^\infty\varepsilon_k<\frac{\xi}{144C}.
\end{equation}
Find $M_1\in\N$ so large that 
$$
\frac{\gamma\sqrt{2C}}{(\log_2M_1)^{1/q}}<\varepsilon_1,
$$
where $C$ comes from (\ref{norm of mu}) and $\gamma$ is the~constant produced by Lemma~\ref{lem: difference of averages} applied to any of the~spaces $\mathfrak{Z}_{n,j}$ (notice that Remark~\ref{rem: Pisier's constant} guarantees that the~same value of $\gamma$ works for all pairs $(n,j)$). For any pair $(n,j)$ with $n>M_1$ and $1\leq j\leq n$, and each $1\leq l\leq M_1$ we set
$$
u_l^{(n,j)}=\left(\sqrt{\bigl|a_i^{(n,j)}\bigr|\bigl\|p^{(n,j)}_i-q^{(n,j)}_i\bigr\|}\Phi(f_l^{M_1})\circ\gamma_i^{(n,j)}\right)_{i=1}^{I_{n,j}}\in\mathfrak{Z}_{n,j},
$$
where $a_i^{(n,j)}$, $p^{(n,j)}_i$, $q^{(n,j)}_i$ come from (\ref{eq: representation of mu}) and $\map{\gamma_i^{(n,j)}}{[0,1]}{X}$ is defined by
$$
\gamma_i^{(n,j)}(t)=q^{(n,j)}_i+t\bigl(p^{(n,j)}_i-q^{(n,j)}_i\bigr)\qquad (t\in [0,1]).
$$
Plainly, by \eqref{norm of f}, \eqref{norm of extension} and \eqref{norm of mu}, we have 
$$
\|u_l^{(n,j)}\|_{\mathfrak{Z}_{n,j}}\leq\sqrt{2C}.
$$
Thus, by Lemma \ref{lem: difference of averages}, there exist subsets $A_{n,j}, B_{n,j}$ of $\{1,\dots,M_1\}$ with $\max A_{n,j}<\min B_{n,j}$ such that
\begin{equation}
\label{eq: application of Lemma 1}
\Biggl\|\frac{1}{|A_{n,j}|}\sum_{l\in A_{n,j}}u_l^{(n,j)}-\frac{1}{|B_{n,j}|}\sum_{l\in B_{n,j}}u_l^{(n,j)}\Biggr\|_{\mathfrak{Z}_{n,j}}\!\!<\varepsilon_1.
\end{equation}
Since there are only finitely many subsets of $\{1,\dots,M_1\}$, we can find $A, B\subset\{1,\dots,M_1\}$ and an~infinite set $\{(n_i,j_i^s)\colon i\in\N,\, 1\leq s\leq M_1+i\}$, where:
\begin{itemize}
\item $n_1>M_1$,

\item $(n_i)_{i=1}^\infty\subset\N$ is strictly increasing,

\item $1\leq j_i^1<\dots<j_i^{M_1+i}\leq n_i$ for each $i\in\N$,
\end{itemize}
such that
$$
A_{n_i,j_i^s}=A\quad\mbox{ and }\quad B_{n_i,j_i^s}=B\quad\mbox{ for all }i\in\N\mbox{ and }1\leq s\leq M_1+i 
$$
(see the~remark above the~statement of Theorem \ref{thm: technical theorem}). Of course, the~sequences $$
\bigl(f_{j_i^1}^{n_i},\dots,f_{j_i^{M_1+i}}^{n_i}\bigr)\subset\Lipf\,\,\mbox{ and }\,\,\bigl(\mu_{j_i^1}^{n_i},\dots,\mu_{j_i^{M_1+i}}^{n_i}\bigr)\subset\aspan\{\delta(p)\colon p\in M\}
$$
for $i\in\N$ satisfy conditions (\ref{norm of mu})--(\ref{big to the right}) with obvious substitution of indices. Therefore, we relabel these sequences as $(f_1^n,\dots,f_n^n)$ and $(\mu_1^n,\dots,\mu_n^n)$, where $n\in\{M_1+1,M_1+2,\ldots\}$. Similarly, $(\nu_1^n,\dots,\nu_n^n)\subset (C_b(rB_X,X^\ast))^\ast$ are the~corresponding extended functionals as in (\ref{definition of nu}). Further, we define $\varphi_1$ as the~constant $1$ function on $rB_X$,
$$
z_1=\frac{1}{2|A|}\sum_{l\in A}f_l^{M_1}-\frac{1}{2|B|}\sum_{l\in B}f_l^{M_1}\in\Lipf,
$$
and
$$
\kappa_1=\mu_{\max A}^{M_1}\in\aspan\set{\delta(p)}{p\in M},\quad \lambda_1=\nu_{\max A}^{M_1}\in (C_b(rB_X,X^\ast))^\ast.
$$
Then $\|\Phi(z_1)\|_\infty \leq 1$ as $z_1$ obviously lies in the~unit ball by (\ref{norm of f}), and, in view of inequalities (\ref{small to the left}) and (\ref{big to the right}), we have 
$$
\left|\ip{\kappa_1}{z_1}\right|\geq\frac {\xi}{3}.
$$
Moreover, combining H\" older's inequality with (\ref{norm of extension}) and (\ref{eq: application of Lemma 1}), we obtain
\begin{equation}\label{eq: combining Holder}
\sum_{i=1}^{I_{n,j}} \bigl|a^{(n,j)}_i\bigr|\bigl\|p^{(n,j)}_i-q^{(n,j)}_i\bigr\|\lint{0}{1}{\bigl\| \Phi(z_1)\bigl(q^{(n,j)}_i+t\bigl(p^{(n,j)}_i-q^{(n,j)}_i\bigr)\bigr)\bigr\|_{X^\ast}}{t}<\sqrt{2C}\varepsilon_1
\end{equation}
for every pair $(n,j)$ with $n\in\{M_1+1,M_1+2,\dots\}$ and $1\leq j\leq n$.
 
Now, to proceed with inductive construction, fix any $k\in\N$, $k\geq 2$ and assume that we have already defined:
\begin{itemize}
\item natural numbers $M_1<\ldots<M_{k-1}$,
\item $(f_1^n,\dots,f_n^n)\subset\Lipf$ for $n>M_{k-1}$,
\item $(\mu_1^n,\dots,\mu_n^n)\subset\aspan\set{\delta(p)}{p\in M}$ for $n>M_{k-1}$,
\item $(z_1,\dots,z_{k-1})\subset\Lipf$,
\item $(\kappa_1,\dots,\kappa_{k-1})\subset\Omega$ (to recall the~definition of $\Omega$ see the~beginning of the~proof) and
\item $(\varphi_1,\dots,\varphi_{k-1})$, a~sequence of $C^1$-smooth Lipschitz real-valued functions on $rB_X$,
\end{itemize}
such that:
\begin{enumerate}[label=(\roman*)]
\setlength{\itemsep}{3pt}

\item the~sequences $(f_1^n,\dots,f_n^n)$ and $(\mu_1^n,\dots,\mu_n^n)$, for $n>M_{k-1}$, are relabeled copies of some of the~original $f_j^n$'s and $\mu_j^n$'s which still satisfy conditions \eqref{norm of mu}--\eqref{big to the right};
\item $\n{\Phi(z_l)}_\infty\leq 1$ for each $1\leq l\leq k-1$;
\label{norm of z_k}
\item $\abs{\ip{\kappa_l}{z_l}}\geq\xi/3$ for each $1\leq l\leq k-1$;
\label{kappa_k at z_k}
\item $\sup\limits_{p\in\overline{\conv}M}\Bigl\|\Phi(\varphi_{l} z_l)(p)-\prod\limits_{i=1}^{l-1}\left(1-\n{\Phi(z_i)(p)}_{X^\ast}\right)\Phi(z_l)(p)\Bigr\|_{X^\ast}\!\!<3\e_l$ for each $1\leq l\leq k-1$;
\label{close element}
\item for each $1\leq l\leq k-1$, inequality \eqref{eq: combining Holder} holds true for every pair $(n,j)$ with $n>M_l$ and $1\leq j\leq n$, and with the~right-hand side replaced by $\sqrt{2C}\e_l$.
\label{complicated inequality}

\end{enumerate}

Since the~derivatives of $f_j^n$'s are continuous, the~function 
$$
\Psi_k:=\map{\prod\limits_{i=1}^{k-1}\left(1-\n{\,\cdot\,}_{X^\ast}\circ\Phi(z_i)\right)}{rB_{X}}{[0,1]}
$$
is uniformly continuous on each compact subset of its domain. Therefore, Lemma~\ref{lem: approximation by Lipschitz functions} produces a~Lipschitz function which uniformly approximates $\Psi_k$ on the~compact set $\overline{\conv}M$. Now, an~appeal to Theorem \ref{thm: approximation by differentiable functions} gives a~$C^1$-smooth  Lipschitz function $\map{\varphi_k}{rB_{X}}{\R}$ such that 
\begin{equation}
\label{eq: approximation by phi}
\abs{\Psi_k(p)-\varphi_k(p)}<\e_k\quad\mbox{for each }p\in\overline\conv\,M.
\end{equation}
Set $D_k=\|\varphi_k\|_{\Lipp}$ and find a~finite $\e_k/D_k$-dense subset $S_k\ni 0$ of $\overline{\conv}\,M$. Pick also a~natural number $M_k>M_{k-1}$ so large that
$$
\frac{\gamma\sqrt{2C+\abs{S_k}(D_kr)^2}}{(\log_2M_k)^{1/q}}<\varepsilon_k.
$$

For any pair $(n,j)$ with $n>M_k$ and $1\leq j\leq n$, and for each $1\leq l\leq M_k$ we define
$$
u_l^{(n,j)}=\left(\sqrt{\bigl|a_i^{(n,j)}\bigr|\bigl\|p^{(n,j)}_i-q^{(n,j)}_i\bigr\|}\Phi(f_l^{M_k})\circ\gamma_i^{(n,j)}\right)_{i=1}^{I_{n,j}}\in \mathfrak{Z}_{n,j},
$$
$$
v_l=\bigl(D_kf_l^{M_k}(p)\bigr)_{p\in S_k}\in \ell_2^{\abs{S_k}}
$$
and
$$
w_l^{(n,j)}=\bigl(u_l^{(n,j)},v_l\bigr)\in\mathfrak{Z}_{n,j}\oplus\ell_2^{|S_k|}.
$$
Then, by inequalities \eqref{norm of f}, \eqref{norm of extension} and \eqref{norm of mu}, we have
$$
\bigl\|w_l^{(n,j)}\bigr\|_{\mathfrak{Z}_{n,j}\oplus\ell_2^{|S_k|}}\leq\sqrt{2C+|S_k|(D_kr)^2}.
$$
Hence, from Lemma \ref{lem: difference of averages} it follows that there exist subsets $A_{n,j}, B_{n,j}$ of $\{1,\dots,M_k\}$ with $\max A_{n,j}<\min B_{n,j}$ such that
\begin{equation}
\label{eq:application of Lemma k}
\Biggl\|\frac{1}{|A_{n,j}|}\sum_{l\in A_{n,j}}w_l^{(n,j)}-\frac{1}{|B_{n,j}|}\sum_{l\in B_{n,j}}w_l^{(n,j)}\Biggr\|_{\mathfrak{Z}_{n,j}\oplus\ell_2^{|S_k|}}<\varepsilon_k.
\end{equation}

As before, since there are only finitely many subsets of $\{1,\dots,M_k\}$, we can find subsets $A, B$ of $\{1,\dots,M_k\}$ and an~infinite set $\{(n_i,j_i^s)\colon i\in\N,\, 1\leq s\leq M_k+i\}$, where:
\begin{itemize}
\item $n_1>M_k$,

\item $(n_i)_{i=1}^\infty\subset\N$ is strictly increasing,

\item $1\leq j_i^1<\dots<j_i^{M_k+i}\leq n_i$ for each $i\in\N$,
\end{itemize}
such that
$$
A_{n_i,j_i^s}=A\quad\mbox{ and }\quad B_{n_i,j_i^s}=B\quad\mbox{ for all }i\in\N\mbox{ and }1\leq s\leq M_k+i.
$$
Again, we relabel the~sequences 
$$
\bigl(f_{j_i^1}^{n_i},\dots,f_{j_i^{M_k+i}}^{n_i}\bigr)\subset\Lipf\,\,\mbox{and }\,\, \bigl(\mu_{j_i^1}^{n_i},\dots,\mu_{j_i^{M_k+i}}^{n_i}\bigr)\subset\aspan\set{\delta(p)}{p\in M}\quad (i\in\N)
$$
as $(f_1^n,\ldots,f_n^n)$ and $(\mu_1^n,\ldots,\mu_n^n)$, respectively, where $n\in\{M_k+1,M_k+2,\ldots\}$. As previously, $(\nu_1^n,\dots,\nu_n^n)\subset (C_b(rB_X,X^\ast))^\ast$ are the~corresponding extensions of $(\Phi^{-1})^\ast(\mu^n_j)$'s.

Define
$$
z_k=\frac{1}{2|A|}\sum_{l\in A}f_l^{M_k}-\frac{1}{2|B|}\sum_{l\in B}f_l^{M_k}\in\Lipf,
$$
and
$$
\kappa_k=\mu_{\max A}^{M_k}\in\aspan\set{\delta(p)}{p\in M},\quad \lambda_k=\nu_{\max A}^{M_k}\in (C_b(rB_X,X^\ast))^\ast.
$$
Then, plainly we have $\|\Phi(z_k)\|_{\infty}\leq 1$
and, by (\ref{small to the left}) and (\ref{big to the right}), also $\left|\ip{\kappa_k}{z_k}\right|\geq\xi/3$.

Moreover, (\ref{eq:application of Lemma k}) yields that 
$|D_kz_k(p)|<{\varepsilon_k}$ for all $p\in S_k$. This means that $D_kz_k$ is small on the~whole $\overline{\conv}M$. Indeed, $D_kz_k$ is a~$D_k$-Lipschitz function and for every $q\in \overline{\conv}M$ we can find $p\in S_k$ such that $\|p-q\|\leq\varepsilon_k/D_k$. Hence
\begin{equation}
\label{eq: z_k small}
|D_kz_k(q)|<2\varepsilon_k\,\,\,\,\,\, \textup{ for all }q\in \overline{\conv}M.
\end{equation} 
Since both $\varphi_k$ and $z_k$ are bounded, Lipschitz and differentiable with continuous derivatives on $rB_X$, and since $z_k(0)=0$, we have that $\varphi_k z_k\in\Lipf$ and 
$$
\Phi(\varphi_k z_k)=\Phi(\varphi_k)z_k+\varphi_k \Phi(z_k).
$$
Hence, for $p\in\overline{\conv}M$,
\begin{align*}
\|\Phi(\varphi_k z_k)(p)-\Psi_k(p)\Phi(z_k)(p)\|_{X^\ast}\leq\, &\abs{z_k(p)}\|\Phi(\varphi_k)(p)\|_{X^\ast}\\
&+\left|\Psi_k(p)-\varphi_k(p)\right|\|\Phi(z_k)(p)\|_{X^\ast}<3\varepsilon_k,
\end{align*}
where the~last inequality follows from (\ref{eq: z_k small}) and (\ref{eq: approximation by phi}).

Observe also that H\"older's inequality, jointly with (\ref{norm of extension}) and (\ref{eq:application of Lemma k}) gives
\begin{equation*}
\sum_{i=1}^{I_{n,j}} \bigl|a^{(n,j)}_i\bigr|\bigl\|p^{(n,j)}_i-q^{(n,j)}_i\bigr\|\lint{0}{1}{\bigl\| \Phi(z_k)\bigl(q^{(n,j)}_i+t\bigl(p^{(n,j)}_i-q^{(n,j)}_i\bigr)\bigr)\bigr\|_{X^\ast}}{t}<\sqrt{2C}\varepsilon_k,
\end{equation*}
for each pair $(n,j)$ with $n\in\{M_k+1,M_k+2,\dots\}$ and $1\leq j\leq n$. 

Therefore, all the~conditions (i)--(v) are satisfied with $k$ in the~place of $k-1$ and hence our inductive construction is complete.

Now, we shall show that the~series $\sum_{k=1}^{\infty}\varphi_kz_k\res{M}$ in $\Lipz{M}$ is WUC. Using conditions \ref{close element} and \ref{norm of z_k}, along with definition (\ref{eq:sum epsilon}), for every $p\in\overline{\conv}M$ we obtain
\begin{align*}
\sum_{k=1}^{\infty}\left\|\Phi(\varphi_kz_k)(p)\right\|_{X^\ast}&\leq\sum_{k=1}^{\infty}3\varepsilon_k+\sum_{k=1}^{\infty}\Psi_k(p)\left\|\Phi(z_k)(p)\right\|_{X^\ast}\\
&=\sum_{k=1}^{\infty}3\varepsilon_k+\sum_{k=1}^{\infty}\prod\limits_{j=1}^{k-1}\bigl(1-\|\Phi(z_j)(p)\|_{X^\ast}\bigr)\left\|\Phi(z_k)(p)\right\|_{X^\ast}\\
&=\sum_{k=1}^{\infty}3\varepsilon_k+\sum_{k=1}^{\infty}\Biggl(\prod\limits_{j=1}^{k-1}\bigl(1-\|\Phi(z_j)(p)\|_{X^\ast}\bigr)-\prod\limits_{j=1}^{k}\bigl(1-\|\Phi(z_j)(p)\|_{X^\ast}\bigr)\Biggr)\\
&<\frac{\xi}{48C}+2.
\end{align*}
Thus, for all $p,q\in M$ we have
\begin{align*}
\sum_{k=1}^{\infty}\frac{\left|\varphi_kz_k(p)-\varphi_kz_k(q)\right|}{\n{p-q}}&\leq\sum_{k=1}^\infty\lint{0}{1}{\Bigl|\Big\langle\frac{p-q}{\n{p-q}},\Phi(\varphi_kz_k)(q+t(p-q))\Big\rangle\Bigr|}{t}\\
&\leq\sum_{k=1}^{\infty}\lint{0}{1}{\bigl\|\Phi(\varphi_kz_k)(q+t(p-q))\bigr\|_{X^\ast}}{t}\\
&\leq\lint{0}{1}{\sum_{k=1}^{\infty}\bigl\|\Phi(\varphi_kz_k)(q+t(p-q))\bigr\|_{X^\ast}}{t}\\
&<\frac{\xi}{48C}+2.
\end{align*}
Now, fix any $\mu\in\free{M}$ and pick sequences $(a_i)_{i=1}^\infty\in\ell_1$ and $(p_i,q_i)_{i=1}^\infty\subset\widetilde M$ so that 
$$\mu=\sum_{i=1}^{\infty}a_i\frac{\delta(p_i)-\delta(q_i)}{\n{p_i-q_i}}.$$ We have
\begin{align*}
\sum_{k=1}^{\infty}\bigl|\big\langle\mu,\varphi_kz_k\res{M}\!\big\rangle\bigr|&\leq\sum_{k=1}^{\infty}\sum_{i=1}^{\infty}|a_i|\frac{\left|\varphi_kz_k(p_i)-\varphi_kz_k(q_i)\right|}{\n{p_i-q_i}}\\
&=\sum_{i=1}^{\infty}|a_i|\sum_{k=1}^{\infty}\frac{\left|\varphi_k z_k(p_i)-\varphi_k z_k(q_i)\right|}{\n{p_i-q_i}}\\
&<\n{(a_i)_{i=1}^\infty}_{\ell_1}\left(\frac{\xi}{48C}+2\right)<\infty.
\end{align*}
By virtue of the~Banach--Steinhaus uniform boundedness principle and Goldstine's theorem, we conclude that the~series $\sum_{k=1}^\infty \varphi_kz_k\res{M}$ is WUC.

In order to complete the~proof, we will show that $\sup_{\mu\in\Gamma}\abs{\ip{\mu}{\varphi_kz_k\res{M}}}\geq\xi/8$ for each $k\in\N$. Recall that for each $k\in\N$ the~measure $\kappa_k$ lies in $\Omega$, so it has a~fixed representation \eqref{eq: representation of mu} satisfying \eqref{norm of extension}. For simplicity, we relabel the~corresponding parameters as $I_k$, $a_i^{k}$, $p_i^{k}$ and $q_i^{k}$ ($1\leq i\leq I_k$). Observe that from definition~(\ref{definition of nu}) and conditions \ref{norm of z_k} and \ref{complicated inequality} it follows that
\begin{align}
\label{application of Bart Simpson}
\nonumber
\big|&\big\langle(1-\Psi_k)\Phi(z_k),\lambda_k\big\rangle\big|\leq\\\nonumber
&\leq\sum_{i=1}^{I_{k}} \left|a^{k}_i\right|\left\|p^{k}_i-q^{k}_i\right\|\lint{0}{1}{\left \| \left(1-\Psi_k\left(q^{k}_i+t\left(p^{k}_i-q^{k}_i\right)\right)\right)\Phi(z_k)\left(q^{k}_i+t\left(p^{k}_i-q^{k}_i\right)\right)\right\|_{X^\ast}}{t}\\\nonumber
&\leq\sum_{i=1}^{I_{k}} \left|a^{k}_i\right|\left\|p^{k}_i-q^{k}_i\right\|\lint{0}{1}{\big(1-\Psi_k\left(q^{k}_i+t\left(p^{k}_i-q^{k}_i\right)\right)\big)}{t}\\\nonumber
&=\sum_{i=1}^{I_{k}} \left|a^{k}_i\right|\left\|p^{k}_i-q^{k}_i\right\|\lint{0}{1}{\Big(1-\prod\limits_{j=1}^{k-1}\bigl(1-\n{\Phi(z_j)(q^{k}_i+t(p^{k}_i-q^{k}_i))}_{X^\ast}\bigr)\Big)}{t}\\\nonumber
&\leq\sum_{i=1}^{I_{k}} \left|a^{k}_i\right|\left\|p^{k}_i-q^{k}_i\right\|\lint{0}{1}{\sum_{j=1}^{k-1}\left\|\Phi(z_j)\left(q^{k}_i+t\left(p^{k}_i-q^{k}_i\right)\right)\right\|_{X^\ast}}{t}\\\nonumber
&\leq\sum_{j=1}^{k-1}\sum_{i=1}^{I_{k}} \left|a^{k}_i\right|\left\|p^{k}_i-q^{k}_i\right\|\lint{0}{1}{\left\|\Phi(z_j)\left(q^{k}_i+t\left(p^{k}_i-q^{k}_i\right)\right)\right\|_{X^\ast}}{t}\\
&<\sqrt{2C}\sum_{j=1}^{k-1}\varepsilon_j.
\end{align}
Note that in the~fifth line we used the~elementary inequality $1-\prod_{j=1}^{k}(1-\alpha_j)\leq\sum_{j=1}^{k}\alpha_j$ for $(\alpha_j)_{j=1}^{k}\subset[0,1]$.
Next, by combining \ref{kappa_k at z_k}, (\ref{application of Bart Simpson}), (\ref{norm of nu depends on convM}), \ref{close element} and \eqref{eq:sum epsilon}, we infer that
\begin{align*}
\abs{\langle \kappa_k,\varphi_kz_k\rangle} &=\left|\ip{\Phi(\varphi_kz_k)}{\lambda_k}\right|\\[2mm]
&\geq\left|\ip{\kappa_k}{z_k}\right|-\left|\ip{\left(1-\Psi_k\right)\Phi(z_k)}{\lambda_k}\right|-\left|\ip{\Phi(\varphi_kz_k)-\Psi_k\Phi(z_k)}{\lambda_k}\right|\\
&\geq\frac {\xi}{3}-\sqrt{2C}\sum_{j=1}^{k-1}\varepsilon_j-2C3\varepsilon_k\geq \frac{\xi}{4}.
\end{align*}
Finally, in view of \ref{close element} and \ref{norm of z_k}, we obtain
\begin{align*}
\n{\varphi_kz_k\res{M}}_{\Lipp}&\leq \sup_{p\in\overline{\conv}M}\n{\Phi(\varphi_kz_k)(p)}_{X^\ast}\\
&\leq\sup_{p\in\overline{\conv}M}\n{\Phi(\varphi_kz_k)(p)-\Psi_k(p)\Phi(z_k)(p)}_{X^\ast}\\
&\quad +\sup_{p\in\overline{\conv}M}\n{\Psi_k(p)\Phi(z_k)(p)}_{X^\ast}\\
&\leq3\varepsilon_k+1<\frac{\xi}{48C}+1.
\end{align*}
Thus, condition (\ref{mu close to gamma}) yields that there exists $\eta_k\in \Gamma$ such that $\abs{\ip{\eta_k}{(\varphi_kz_k)\res{M}}}\geq\xi/8$. The~proof is complete by defining $\varrho_k=(\varphi_kz_k)\res{M}$ for $k\in\N$.
\end{proof}

\begin{proof}[Proof of Main Theorem]
Of course, we can assume that $M$ contains the~origin of $X$ and that it is the~distinguished point in $M$. Let $(\mu_n)_{n=1}^{\infty}\subset\free{M}$ be a~weakly Cauchy sequence which is not weakly convergent. Then, by Theorem \ref{thm: technical theorem}, there is a~WUC series $\sum_{k=1}^\infty \varrho_k$ in $\Lipz{M}$ such that 
$$\limsup_{k\to\infty}\,\sup\bigl\{\abs{\ip{\mu_n}{\varrho_k}}\colon n\in \N\bigr\}>0.$$ Therefore, we can define a~bounded linear operator $\map{T}{\free{M}}{\ell_1}$ by 
$$T(\mu)=(\ip{\mu}{\varrho_k})_{k=1}^{\infty}\quad (\mu\in\free{M}),$$
so that $T(\{\mu_n\colon n\in\N\})$ is not relatively norm-compact in $\ell_1$. Hence, there exists a~subsequence of $(\mu_n)_{n=1}^{\infty}$ equivalent to the~unit vector basis of $\ell_1$ (see {\it e.g.} \cite[Thm.~III.C.9]{W_book}), which is a~contradiction with $(\mu_n)_{n=1}^\infty$ being weakly Cauchy.
\end{proof}

\section{Examples}\noindent
Below, we provide several examples of metric spaces to which our Main Theorem applies, and which were not covered by previously known results.

\vspace*{2mm}\noindent
{\bf 1. }For $p\in (1,\infty)$ let $\mathcal{Q}_p=\prod_{n=1}^\infty [0,\frac{1}{n}]$ be the Hilbert cube equipped with the $\ell_p$-metric, that is,
$$
\rho(\boldsymbol{\mathrm{x}},\boldsymbol{\mathrm{y}})=\Bigl(\sum_{n=1}^\infty\abs{x_n-y_n}^p\Bigr)^{1/p}\quad\mbox{for }\boldsymbol{\mathrm{x}}=(x_n)_{n=1}^\infty,\, \boldsymbol{\mathrm{y}}=(y_n)_{n=1}^\infty\in \mathcal{Q}_p.
$$
Plainly, $\mathcal{Q}_p$ is a compact subset of $\ell_p$ and hence the Main Theorem implies that for each $p\in(1,\infty)$ the Lipschitz-free space $\free{\mathcal{Q}_p}$ is weakly sequentially complete.

It is worth noticing that in this way we obtain a~collection of metric spaces which are mutually nonbilipschitz homeomorphic. To see this, we shall recall the~notion of metric type introduced by Enflo (\cite{enflo_type1}, \cite{enflo_type2}) and developed later in various forms (see {\it e.g.} \cite{BMW}). A~metric space $(M,\rho)$ has {\it Enflo type} $p$ if there exists a~constant $T>0$ such that for every $n\in\N$ and every map $f\colon \{-1,1\}^n\to M$ we have
\begin{equation*}
\begin{split}
\mathbb{E}_\e\, \bigl[\rho(f(\e),f(-\e))^p\bigr]\leq T^p\sum_{j=1}^n\mathbb{E}_\e\,\bigl[\rho\bigl( & f(\e_1,\ldots,\e_{j-1},\e_j,\e_{j+1},\ldots,\e_n),\\
& f(\e_1,\ldots,\e_{j-1},-\e_j,\e_{j+1},\ldots,\e_n)\bigr)^{\! p}\bigr],
\end{split}
\end{equation*}
where the expectation values are taken with respect to uniform choice of $\e\in\{-1,1\}^n$. Note that at the~left-hand side we have lengths of diagonals, whereas at the right-hand side we have lengths of edges of an~$n$-cube in $M$ determined by the~function $f$. As it was shown by Enflo \cite{enflo_type1}, $L_p([0,1])$ has Enflo type $p$ for every $p\in [1,2]$, and hence so does $\mathcal{Q}_p$.

Let $q\in (1,2]$, $n\in\N$ and consider a~map $f\colon\{-1,1\}^n\to\mathcal{Q}_q$ given by
$$
f(\e)=\Bigl(\frac{1+\e_1}{2n},\ldots,\frac{1+\e_n}{2n},0,0,\ldots\Bigr).
$$
Obviously, the length of each edge equals $n^{-1}$ and the lenght of each diagonal equals $n^{(1-q)/q}$. Therefore, if $\mathcal{Q}_q$ had Enflo type $p>q$, there would be a~constant  $C>0$ such that $n^{p/q}\leq Cn$ for every $n\in\N$, which is impossible. Since the Enflo type is a~bilipschitz invariant, we conclude that $\mathcal{Q}_q$ does not bilipschitz embed in $\mathcal{Q}_p$ for $1<q<p\leq 2$. In particular, the~metric spaces $\{\mathcal{Q}_p\colon 1<p\leq 2\}$ are mutually nonbilipschitz homeomorphic. The cases where $q>\max\{p,2\}$ or $2<q<p$ (in which it is known that $L_q$ does not bilipschitz embed in $L_p$) are more subtle, as seeking for metric invariants which would explain the corresponding nonembeddability results for $L_p$-spaces proved to be a~very difficult problem (see \cite{naor_schechtman} and the~references therein).


\vspace*{2mm}\noindent
{\bf 2. }Lafforgue and Naor \cite{LN} constructed, for each $p\in (2,\infty)$, a~doubling subset $\mathcal{M}_p$ of $L_p$ which does not admit a~bilipschitz embedding into $L_q$, for any $q\in (1,p)$. Recall that a~metric space $M$ is called {\it doubling} if for some $k\in\N$, every ball in $M$ can be covered by at most $k$ balls of half its radius, which obviously implies that every ball in $M$ is compact. Although the Lafforgue--Naor spaces $\mathcal{M}_p$'s are not compact, as being built with the~aid of a~`disjoint union argument' (see \cite[p.~388]{LN}), we can employ Kalton's theorem \cite[Prop.~4.3]{kalton} which gives
$$
\free{\mathcal{M}_p}\xhookrightarrow[\,1+\e\,]{}\Biggl(\bigoplus_{k=1}^\infty\free{\mathcal{M}_{p,k}}\Biggr)_{\!\!\ell_1},
$$
where $\mathcal{M}_{p,k}$ stands for the ball of radius $2^k$ centered at the~origin and the~arrow indicates a~$(1+\e)$-isometric linear embedding. Since weak sequential completeness is preserved by $\ell_1$-sums, we infer that for every $p\in (2,\infty)$ the~space $\free{\mathcal{M}_p}$ is weakly sequentially complete. In this way we have shown that the Main Theorem applies to a~class of noncompact metric spaces which are not bilipschitz embeddable into a~Hilbert space.

\vspace*{2mm}\noindent
{\bf 3. }Finally, let us mention that for a~certain class of metric spaces $M$ there are convenient conditions verifying whether $M$ bilipschitz embeds into an~$L_p$-space. Recall that if $K$ is any set, then a~map $f\colon K\times K\to\C$ is called {\it positive-definite} if 
$$
\sum_{1\leq i,j\leq n} f(t_i,t_j)\xi_i\overline{\xi_j}\geq 0
$$
for all $n\in\N$, $t_1,\ldots,t_n\in K$ and $\xi_1,\ldots,\xi_n\in\C$. By the classical Schoenberg's theorem \cite{schoenberg}, a~metric space $(M,\rho)$ isometrically embeds into a~Hilbert space if and only if the map $M\times M\ni (x,y)\mapsto\rho(x,y)^2$ is negative-definite on $M$; equivalently: $K_t(x,y)=\exp(-t\rho(x,y)^2)$ defines a~positive-definite map on $M$ for each $t>0$. Schoenberg also showed that for every $p\in [1,2]$ the map $\n{x-y}^p$ is negative-definite on $L_p$. Bretagnolle, Dacunha-Castelle and Krivine \cite{BD-CK} proved the converse, namely, if $(X,\n{\cdot})$ is a~normed space such that, for some $p\in [1,2]$, the map $\n{x-y}^p$ is negative-definite, then $X$ embeds linearly and isometrically into $L_p$. Consequently, if $M$ is a~compact subset of a~normed space $(X,\n{\cdot})$ with $\n{x-y}^p$ negative-definite for some $p\in (1,2]$, then $\free{M}$ is weakly sequentially complete.

\medskip\noindent {\bf Acknowledgement.}{ The authors would like to thank Gilles Godefroy for pointing out a connection between the obtained result and the study of property ($X$), as presented in Problem \ref{problem_X} and the note above.}

\end{document}